\theoremstyle{plain}
\newtheorem{theorem}{Theorem}[section]
\newtheorem{lemma}[theorem]{Lemma}
\theoremstyle{definition}
\newtheorem{remark}[theorem]{Remark}
\theoremstyle{remark}
\numberwithin{equation}{section}
\newcommand{\N}{\mathbb N}
\newcommand{\Z}{\mathbb Z}
\newcommand{\R}{\mathbb R}
\newcommand{\C}{\mathbb C}
\newcommand{\GL}{\operatorname{GL}}
\newcommand{\Ot}{\operatorname{O}}
\newcommand{\SO}{\operatorname{SO}}
\newcommand{\SU}{\operatorname{SU}}
\newcommand{\U}{\operatorname{U}}
\newcommand{\Sp}{\operatorname{Sp}}
\newcommand{\so}{\mathfrak{so}}
\newcommand{\op}{\operatorname}
\newcommand{\Hom}{\operatorname{Hom}}
\newcommand{\tr}{\operatorname{tr}}
\newcommand{\diag}{\operatorname{diag}}
\newcommand{\mi}{\mathtt{i}}
\newcommand{\intervalo}{[\![1,n]\!]}
\title[Branching multiplicity space]{On the $\textrm{SO}(n+3)$ to $\textrm{SO}(n)$ branching multiplicity space}
\author{Emilio~A.~Lauret, Fiorela Rossi Bertone}
\address{L.: Institut f\"ur Mathematik, Humboldt Universit\"at zu Berlin, Unter den Linden 6, 10099 Berlin, Germany.
 Permanent affiliation: CIEM--FaMAF (CONICET), Universidad Nacional de C\'ordoba, Medina Allende s/n, Ciudad Universitaria, 5000 C\'ordoba, Argentina.}
\email{elauret@famaf.unc.edu.ar}
\address{R.B.: CIEM--FaMAF (CONICET), Universidad Nacional de C\'ordoba, Medina Allende s/n, Ciudad Universitaria, 5000 C\'ordoba, Argentina.}
\email{rossib@famaf.unc.edu.ar}
\subjclass[2010]{20G05, 22E46}
\keywords{Branching law, multiplicity space, orthogonal groups.}
\thanks{This research was partially supported by grants from CONICET and FONCyT. The first named author was supported by the Alexander von Humboldt Foundation}
\date{\today}
\begin{document}

\begin{abstract}
We study the decomposition as an $\textrm{SO}(3)$-module of the multiplicity space corresponding to the branching from $\textrm{SO}(n+3)$ to $\textrm{SO}(n)$. 
Here, $\textrm{SO}(n)$ (resp.\ $\textrm{SO}(3)$) is considered embedded in $\textrm{SO}(n+3)$ in the upper left-hand block (resp.\ lower right-hand block).
We show that when the highest weight of the irreducible representation of $\textrm{SO}(n)$ interlaces the highest weight of the irreducible representation of $\textrm{SO}(n+3)$, then the multiplicity space decomposes as a tensor product of $\lfloor (n+2)/2\rfloor$ reducible representations of $\textrm{SO}(3)$. 
\end{abstract}

\maketitle


\section{Introduction} \label{sec:intro}
The branching law from a compact Lie group $G$ to a closed subgroup $K$ describes how an irreducible representation $\pi$ of $G$ decomposes when is restricted to $K$ (see \cite[Ch.~IX]{Knapp-book-beyond} and   \cite[Ch.~8]{GoodmanWallach-book-Springer} for comprehensive texts, and \cite[\S1]{Knapp01} for a detailed historical review). 
Since 
\begin{equation}
\pi\simeq \bigoplus_{\tau\in\widehat K} \, \tau\otimes \Hom_K(\tau,\pi)
\end{equation}
as $K$-modules ($\widehat K$ denotes the unitary dual of $K$, and $K$ acts on the right-hand side at the left in each term), the branching law is determined by the dimension of the \emph{branching multiplicity space} (or just \emph{multiplicity space})
$\Hom_K(\tau,\pi),$ for each $\tau \in \widehat K$.
In other words, $\dim \Hom_K(\tau,\pi)$ is the number of times that $\tau$ occurs in $\pi|_K$.

For $d\geq d'\geq1$, let $G$, $K$ and $H$ be given by a row in the table
\begin{equation}\label{eq:Grassmanianos}
\begin{array}{c@{\hspace{10mm}}c@{\hspace{10mm}}c@{\hspace{10mm}}c}
G & K & H &\text{type} \\ \hline 
\rule{0pt}{14pt}
\SO(d+d') & \SO(d) & \SO(d') &\text{orthogonal}\\
\U(d+d') & \U(d) & \U(d')&\text{unitary} \\
\Sp(d+d') & \Sp(d) & \Sp(d') &\text{symplectic}\\
\end{array}.
\end{equation}
In the sequel, we assume $K$ (resp.\ $H$) embedded in $G$ in the upper left-hand block (resp.\ lower right-hand block).
The quotient $G/(K\times H)$ is a compact symmetric space called (real, complex or quaternionic) Grassmannian space.

We now examine some consequences from the fact that the subgroups $K$ and $H$ commute to each other.
The subgroup of $G$ generated by $K$ and $H$ is isomorphic to $K\times H$; we denote it by $K\times H$. 
Thus, any irreducible representation of $K\times H$ is given by the outer tensor product $\sigma\otimes \tau$ for some $\sigma\in\widehat K$ and $\tau\in\widehat H$. 
Furthermore, the branching multiplicity space $\Hom_K(\sigma,\pi)$ carries the structure of an $H$-module. 
The action is given by $(k\cdot \varphi)(v) = \pi(k)\cdot\varphi(v)$, for $\varphi\in \Hom_K(\sigma,\pi)$ and $v$ in the underlying vector space $V_\sigma$ of $\sigma$. 
We conclude that the multiplicity of $\sigma\otimes\tau$ in $\pi|_K$ is equal to
\begin{equation}
\dim \Hom_{K\times H}({\sigma}\otimes {\tau},\pi) = \dim \Hom_H(\tau, \Hom_K(\sigma,\pi)).
\end{equation}
Therefore, an explicit decomposition as an $H$-module of the multiplicity space $\Hom_K(\sigma,\pi)$ may be seen as a more precise branching law from $G$ to $K\times H$.

The branching law from $G$ to $K\times H$ is known only for specific choices of $d$ and $d'$. 
We first review the case when $d'=1$. 
In the orthogonal case, $H\simeq \SO(1)=\{1\}$, thus the problem reduces to the classical branching from $\SO(d+1)$ to $\SO(d)$.
A similar situation takes place in the unitary case where $H\simeq \U(1)$. 
In conclusion, in both cases, under standard choices for a Cartan subalgebra and positiveness in the associated root systems (e.g.\ as in \cite{Knapp-book-beyond} or \cite{GoodmanWallach-book-Springer}), if $\lambda=(\lambda_1,\lambda_2,\dots)$ and $\mu = (\mu_1,\mu_2,\dots)$ are the highest weights of $\pi\in\widehat G$ and $\sigma\in \widehat K$ respectively, then $\sigma$ occurs in $\pi|_K$ if and only if $\mu$ \emph{simply interlaces} $\lambda$, which roughly speaking means 
\begin{equation}
\lambda_1\geq \mu_1\geq \lambda_{2}\geq \mu_2\geq \lambda_3\geq\cdots.
\end{equation}

The symplectic case still assuming $d'=1$, which was established by Lepowsky~\cite{Lepowsky71}, presents more difficulties than the previous cases. 
For example, it is not multiplicity-free. 
Moreover, the necessary condition $\Hom_K(\sigma,\pi)\neq0$ becomes in a \emph{doubly interlacing} $\lambda_i\geq \mu_i\geq \lambda_{i+2}$ for all $1\leq i\leq d$ (with $\lambda_{d+2}=\lambda_{d+3}=0$) between the coefficients $(\lambda_1,\dots, \lambda_{d+1})$ and $(\mu_1,\dots,\mu_d)$ of the highest weight of $\pi$ and $\sigma$ respectively. 
Wallach and Yacobi~\cite{WallachYacobi09} (see also \cite{Yacobi10} and \cite{KimYacobi12}) gave a clean decomposition for the multiplicity space.
They proved that
\begin{equation}\label{eq:neatWY}
\Hom_K(\sigma,\pi) \simeq \tau^{(1)}\otimes\dots\otimes \tau^{(d+1)}
\end{equation}
as $H$-modules, where $\tau^{(i)}$ denotes the irreducible representation of $H=\Sp(1)\simeq \SU(2)$ of dimension $s_i-t_i+1$, and $\{s_1\geq t_1\geq \dots \geq s_{d+1}\geq t_{d+1}\}$ is the decreasing rearrangement of $\{\lambda_1,\dots,\lambda_{d+1},\mu_1,\dots,\mu_{d},0\}$.
In their proof, they extended the Clebsch--Gordan formula to an arbitrary tensor product of irreducible representations of $\SU(2)$ (see Theorem~\ref{thm:ClebschGordan}).

We now consider $d'=2$. 
In the orthogonal case, $H=\SO(2)$ is abelian, so its representations are one-dimensional. 
In conclusion, a decomposition as an $H$-module of the multiplicity space does not provide more information than its dimension.
An implicit branching law from $G=\SO(d+2)$ to $K\times H=\SO(d)\times \SO(2)$ was given by Tsukamoto~\cite{Tsukamoto81}. 
The implicit term refers to the fact that the number of times that an irreducible representation $\sigma\otimes\tau_k$ of $K\times H$ appears in $\pi|_{K\times H}$ is given by the $k$-th coefficient of certain power series, where $\tau_k(h)$ acts on $\C$ by multiplication by $h^k$ for any $h\in S^1\simeq H$. 

Kim~\cite{Kim13} considered the unitary case when $d'=2$. 
He gave a decomposition of the branching multiplicity space as a $\U(2)$-module similar to \eqref{eq:neatWY}. 
In order to describe such decomposition, let $\pi\in\widehat G$ and $\sigma\in\widehat K$ with highest weights $\lambda=(\lambda_1,\dots,\lambda_{d+2})$ and $\mu=(\mu_1,\dots,\mu_d)$ respectively, and write $\{s_1\geq t_1\geq \dots \geq s_{d+1}\geq t_{d+1}\}$ for the decreasing rearrangement of $\{\lambda_1,\dots,\lambda_{d+2},\mu_1,\dots,\mu_{d}\}$.
Kim's decomposition (see \cite[Thm.~3.5]{Kim13}) is
\begin{equation}\label{eq:neatKim}
\Hom_K(\sigma,\pi) \simeq \C\otimes \tau^{(1)}\otimes\dots\otimes \tau^{(d+1)},
\end{equation}
where $\C$ is the one-dimensional representation given by $\det(h)^{\mu_1+\dots+\mu_d}$, and $\tau^{(i)}$ denotes the $(s_i-t_i+1)$-dimensional representation $\C\otimes \op{Sym}^{t_i-s_i}(\C^2)$ of $H$, with $h\in H$ acting on $\C$ by $\det(h)^{s_i}$ and $\C^2$ denotes the standard representation of $H$.

The next challenge is the case $d'=3$. 
The orthogonal case seems to be the simplest one since $\SO(3)\simeq \SU(2)/\{\pm1\}$ is three-dimensional, while $\U(3)$ and $\Sp(3)$ have dimensions $9$ and $21$ respectively.

The aim of this paper is to study, for $G=\SO(d+3)$, $K=\SO(d)$, $H=\SO(3)$, whether there is a clean decomposition of the $G$ to $K$ branching multiplicity space as an $H$-module as in \eqref{eq:neatWY} and \eqref{eq:neatKim}. 
The conclusion is that this decomposition is pretty dirty.

Nevertheless, when $\mu$ simply interlaces $\lambda$, we decompose the multiplicity space $\Hom_K(\sigma_\mu, \pi_\lambda)$ as a tensor product of $\lfloor \tfrac{d+2}2\rfloor$ (reducible) representations of $H$ (Theorems~\ref{thm2n:mainthm} and \ref{thm2n+1:mainthm}). 
Generically, each factor of the tensor product is non-trivial.
We recall that $\Hom_K(\sigma_\mu,\pi_\lambda)\neq0$ if and only if $\mu$ triply interlaces $\lambda$ (cf.\ last paragraph in \cite[\S IX.3]{Knapp-book-beyond}).

Furthermore, Theorems~\ref{thm2n:ending} and \ref{thm2n+1:ending} establish a partial decomposition of $\Hom_K(\sigma,\pi)$ as an $H$-module under certain coincidence among the coefficients of the highest weights of $\pi$ and $\sigma$. 
Moreover, the decomposition is reduced to a simple branching law from $\U(3)$ to $H=\SO(3)$ by using a duality by Knapp~\cite{Knapp01}.

Tsukamoto~\cite{Tsukamoto05} showed an implicit branching law from $G$ to $K\times H$. 
Similarly as mentioned above, his result gives the multiplicity of an irreducible representation $\sigma\otimes\tau$ of $K\times H$ in $\pi|_{K\times H}$ for $\pi\in\widehat G$ as the coefficient of certain power series. 
El Chami~\cite{Chami04}\cite{Chami12} applied the same method to the branching laws from $\SO(d+d')$ to $\SO(d)\times\SO(d')$ and from $\Sp(d+d')$ to $\Sp(d)\times \Sp(d')$.
In all cases, their main goal was to describe the spectra of the corresponding symmetric spaces.

The tools used in the proofs include Kostant's branching formula, Tsukamoto's implicit branching law from $\SO(d+3)$ to $\SO(d)\times \SO(3)$, and a duality of  Knapp~\cite{Knapp01} between the $\SO(3)$-representation $V_\pi^{\SO(d)}$ and certain canonical associated representation of $\U(3)$.  

The article is organized as follows. 
Section~\ref{sec:preliminaries} reviews standard facts used in the sequel. 
The case when $G=\SO(2n+3)$, $K=\SO(2n)$ and $H=\SO(3)$, called the type B case, is considered in Section~\ref{sec:casoB}. 
Similarly, Section~\ref{sec:casoD} deals with the type D case, that is, when $G=\SO(2n+4)$, $K=\SO(2n+1)$ and $H=\SO(3)$.

\section{Preliminaries} \label{sec:preliminaries}
In this section we introduce several tools used in the sequel, divided in subsections. 
Such tools are Kostant's branching formula, Wallach and Yacobi's extension of the Clebsch--Gordan rule, and standard facts on characters of compact groups and representations of $\U(3)$.

In what follows, we denote compact Lie groups by capital letters (e.g.\ $G$), their Lie algebras by the corresponding Gothic letter with the subscript $0$ (e.g.\ $\mathfrak g_0$), and their complexified Lie algebras by the corresponding Gothic letter (e.g. $\mathfrak g$). 

Furthermore, each time that a maximal torus $T$ is fixed in a compact Lie group $G$, therefore a Cartan subalgebra $\mathfrak t$ of $\mathfrak g$ is picked, we will use the following notation without any mention: 
$\Phi(\mathfrak g,\mathfrak t)$ denotes the associated root system, $W_{\mathfrak g}$ the Weyl group, and $P(G)$ the weight lattice of $G$.
Moreover, a positive system $\Phi^+(\mathfrak g,\mathfrak t)$ will be assumed, unless an order on $\mathfrak t^*$ is explicitly chosen. 
In any of these cases, we denote by $P^{++}(G)$ the set of $G$-integral dominant weights in $\mathfrak t^*$ and by $\rho_{\mathfrak g}$ half of the sum of the positive roots.

\subsection{Characters}
We first review standard facts for characters. 
We refer to \cite[Ch.~IV--V]{Knapp-book-beyond} for further details. 
Let $G$ be a compact connected semisimple Lie group, let $T$ be a maximal torus in $G$, and let $\pi$ be a finite-dimensional representation of $G$, that is, a continuous homomorphism $\pi:G\to\GL(V_\pi)$, where $V_\pi$ is the complex finite-dimensional underlying vector space of $\pi$. 

We denote by $\chi_\pi:G\to\C$ the character of $\pi$, that is, $\chi_\pi(g)=\tr(\pi(g))$. 
It is well known that $\chi_\pi$ determines $\pi$, that is, $\chi_\pi=\chi_{\pi'}$ if and only if $\pi$ and $\pi'$ are equivalent.  
It will be useful to consider $\chi_\pi$ as a formal power series $\sum_{\eta \in P(G)} m_\pi(\eta)\,  e^{\eta}$, with $m_\pi(\eta) \in \N_0$ the multiplicity of $\eta$ in $\pi$. 
The identification satisfies $\chi_\pi(\exp(X)) = \sum_{\eta \in P(G)} m_\pi(\eta)\, e^{\eta(X)}$ for all $X\in \mathfrak t_0$. 
Of course, $m_\pi(\eta)=0$ for all but finitely many $\eta\in P(G)$. 
For $\eta \in P(G)$, we set 
\begin{equation}\label{eq:xi}
	\xi_{G}(\eta) = \sum_{\omega\in W_{\mathfrak g}} \op{sgn}(\omega) \, e^{\omega(\eta)}. 
\end{equation}

For $\lambda\in P^{++}(G)$, let $\pi_\lambda$ denote the irreducible representation of $G$ with highest weight $\lambda$. 
The Weyl character formula ensures that 
\begin{equation}\label{eq:Weylcharacter}
	\chi_{\pi_\lambda} = \frac{\xi_G(\lambda+\rho_{\mathfrak g})} {\xi_G(\rho_{\mathfrak g})}. 
\end{equation}
Furthermore, 
\begin{equation}\label{eq:Weyldenominator}
	\xi_G(\rho_{\mathfrak g}) = \prod_{\alpha\in \Phi^+(\mathfrak g,\mathfrak t)} (e^{ \alpha/2} - e^{-\alpha/2}). 
\end{equation}

Let $K$ be a closed subgroup of $G$.
Suppose that a maximal torus $S$ of $K$ is contained in $T$. 
For $\beta\in \mathfrak t^*$, we denote by $\bar\beta$ its restriction to $\mathfrak s^*$. 
We extend this operator to the formal power series discussed above by setting $\overline{e^{\eta}}=e^{\bar \eta}$ for any $\eta\in P(G)$.
It turns out that, for a representation $\pi$ of $G$, the character of its restriction $\pi|_K$ to $K$ satisfies
\begin{equation}\label{eq:characterrestriction}
	\chi_{\pi|_K} = \overline{\chi_\pi}. 
\end{equation}

\subsection{Kostant's Branching Formula} \label{subsec:Kostant}
We will follow \cite[\S IX.4]{Knapp-book-beyond} (see also \cite[\S 8.2]{GoodmanWallach-book-Springer}). 
This formula is valid for a big amount of homogeneous spaces including all symmetric spaces.

Let $G$ be a connected compact Lie group, and let $K$ be a connected closed subgroup.
We assume that the centralizer $T$ in $G$ of a maximal torus $S$ of $K$ is abelian.
Thus, $T$ is a maximal torus in $G$.
Equivalently, there is a regular element of $K$ that is regular in $G$.
This allows us to introduce compatible positive systems $\Phi^+(\mathfrak g,\mathfrak t)$ and $\Phi^+(\mathfrak k,\mathfrak s)$ by defining positivity relative to a regular element in $\mi \mathfrak s_0$.
Set
\begin{equation}
\Sigma = \overline{ \Phi^+(\mathfrak g,\mathfrak t)} \smallsetminus \Phi^+(\mathfrak k,\mathfrak s). 
\end{equation}
More precisely, $\Sigma$ is the multiset given by the elements $\overline\alpha$ for $\alpha\in \Phi^+(\mathfrak g,\mathfrak t)$, repeated according to their multiplicity, but deleting the elements in $\Phi^+(\mathfrak k, \mathfrak s)$, each with multiplicity one. 
The Kostant partition function $\mathcal P_\Sigma$ is defined as follows: 
$\mathcal P_\Sigma(\nu)$ is the number of ways that a member $\nu$ of $\mathfrak s^*$ can be written as a sum of members of $\Sigma$, with the multiple versions of a member of $\Sigma$ being regarded as distinct.

Under the notation above, for $\lambda\in P^{++}(G)$ and $\mu\in P^{++}(K)$, Kostant's Branching Formula tells us that the multiplicity of the irreducible representation $\sigma_\mu$ of $K$ with highest weight $\mu$ in the restriction of the irreducible representation $\pi_\lambda$ of $G$ with highest weight $\lambda$ is given by
\begin{equation}\label{eq:Kostant}
\dim \Hom_{K}(\sigma_\mu ,\pi_\lambda)
= \sum_{w\in W_{\mathfrak g}} \op{sgn}(\omega) \;  \mathcal P_{\Sigma}(\overline{\omega(\lambda+\rho_{\mathfrak g})-\rho_{\mathfrak g}}-\mu).
\end{equation}

\subsection{Generalized Clebsch--Gordan formula} \label{subsec:Clebsh-Gordan} 
We now recall the extension of the Clebsch--Gordan formula given by Wallach and Yacobi~\cite{WallachYacobi09} for an arbitrary tensor product of irreducible representations of $\SU(2)$.
It is well known that the representations of $\SU(2)$ are parametrized by non-negative integer numbers.
We denote them by $\tau_{k/2}$ for $k\in \Z_{\geq0}$, whose underlying vector space $V_{\tau_{k/2}}$ has dimension $k+1$.

Let $\{\varepsilon_1,\dots,\varepsilon_{n+1}\}$ denote the canonical basis of $\C^{n+1}$. 
We set 
\begin{equation}\label{eq:Sigma'}
\Sigma'=\{\varepsilon_i\pm \varepsilon_{n+1}: 1\leq i\leq n\}. 
\end{equation}
For $\nu \in \C^{n+1}$, denote by $\mathcal P_{\Sigma'}(\nu )$ the number of ways of writing $\nu $ as a sum of elements in $\Sigma'$, 
\begin{equation}
\mathcal P_{\Sigma'}(\nu ) = \#\left\{ \{a_\alpha\}_{\alpha\in\Sigma'}: a_\alpha\in\N_0,\, \sum_{\alpha\in\Sigma'} a_\alpha \alpha=\nu  \right\}. 
\end{equation}

\begin{theorem}\cite[Thm.~2.3]{WallachYacobi09} \label{thm:ClebschGordan}
For non-negative integers $r_1,\dots,r_{n+1}$, the number of times that $\tau_{k/2}$ appears in $\tau_{r_1/2}\otimes \dots\otimes \tau_{r_{n+1}/2}$ is equal to
\begin{equation*}
\dim\Hom_{\SU(2)} ({ \tau_{k/2}}, \otimes_{i=1}^{n+1}  {\tau_{r_i/2}} )
= \textstyle \mathcal P_{\Sigma'}\left( \sum\limits_{i=1}^{n+1} r_i\varepsilon_i - k\varepsilon_{n+1}\right) - \mathcal P_{\Sigma'}\left( \sum\limits_{i=1}^{n+1} r_i\varepsilon_i + (k+2)\varepsilon_{n+1}\right).
\end{equation*}
\end{theorem}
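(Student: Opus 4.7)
The plan is to prove the formula by direct character manipulation in $\SU(2)$, combined with a combinatorial reinterpretation of $\mathcal P_{\Sigma'}$. Write $\chi_r(z):=z^r+z^{r-2}+\dots+z^{-r}$ for the character of $\tau_{r/2}$ at $\op{diag}(z,z^{-1})$, and $[z^m]$ for the operator extracting the coefficient of $z^m$. The first step is a combinatorial lemma: for non-negative integers $a_1,\dots,a_n$ and any $m\in\Z$,
$$
\mathcal P_{\Sigma'}\Bigl(\sum_{i=1}^n a_i\varepsilon_i+m\varepsilon_{n+1}\Bigr)=[z^m]\prod_{i=1}^n \chi_{a_i}(z).
$$
This follows directly from the definition: a decomposition $\sum a_i\varepsilon_i+m\varepsilon_{n+1}=\sum_i c_i(\varepsilon_i+\varepsilon_{n+1})+\sum_i d_i(\varepsilon_i-\varepsilon_{n+1})$ with $c_i,d_i\in\N_0$ is equivalent to a tuple $(c_i,d_i)$ satisfying $c_i+d_i=a_i$ and $\sum_i(c_i-d_i)=m$, and the count of such tuples is the $z^m$-coefficient of $\prod_i\sum_{c+d=a_i}z^{c-d}=\prod_i\chi_{a_i}(z)$.

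Next, let $m(k):=\dim\Hom_{\SU(2)}(\tau_{k/2},\otimes_i\tau_{r_i/2})$, so that $\prod_{i=1}^{n+1}\chi_{r_i}(z)=\sum_{k\ge 0}m(k)\chi_k(z)$. Multiplying both sides by the Weyl denominator $z-z^{-1}$ and applying the $\SU(2)$ Weyl character formula $(z-z^{-1})\chi_k(z)=z^{k+1}-z^{-(k+1)}$, first to isolate the factor $\chi_{r_{n+1}}$ on the left and then to each summand on the right, yields
$$
\bigl(z^{r_{n+1}+1}-z^{-(r_{n+1}+1)}\bigr)\prod_{i=1}^n\chi_{r_i}(z)=\sum_{k\ge 0}m(k)\bigl(z^{k+1}-z^{-(k+1)}\bigr).
$$
Since $\prod_{i=1}^n\chi_{r_i}(z)$ is invariant under $z\leftrightarrow z^{-1}$, matching the coefficient of $z^{k+1}$ on both sides (for $k\ge 0$) gives
$$
m(k)=[z^{r_{n+1}-k}]\prod_{i=1}^n\chi_{r_i}(z)-[z^{k+r_{n+1}+2}]\prod_{i=1}^n\chi_{r_i}(z),
$$
and applying the combinatorial lemma with $a_i=r_i$ and with $m=r_{n+1}-k$ and $m=r_{n+1}+k+2$ respectively identifies each coefficient with the corresponding value of $\mathcal P_{\Sigma'}$.

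I do not expect a serious obstacle; the only non-routine step is the combinatorial lemma, which packages the partition function for $\Sigma'$ as coefficient extraction from a product of $\SU(2)$ characters. One could alternatively try to invoke Kostant's branching formula (Subsection~\ref{subsec:Kostant}) for the diagonal embedding $\SU(2)\hookrightarrow\SU(2)^{n+1}$, but the natural multiset $\Sigma$ there consists of $n$ copies of a single weight in the one-dimensional space $\mathfrak s^*$ and does not match $\Sigma'\subset\C^{n+1}$, so a nontrivial resummation would be required; the direct character argument above is cleaner.
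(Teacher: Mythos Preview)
The paper does not supply a proof of this theorem; it is quoted verbatim from \cite[Thm.~2.3]{WallachYacobi09} and used as a black box. So there is no in-paper argument to compare against.

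Your argument is correct. The key observation is the combinatorial lemma identifying $\mathcal P_{\Sigma'}\big(\sum_{i=1}^n a_i\varepsilon_i+m\varepsilon_{n+1}\big)$ with the coefficient $[z^m]\prod_{i=1}^n\chi_{a_i}(z)$; once this is in hand, multiplying the character identity by the Weyl denominator and reading off the coefficient of $z^{k+1}$ is routine. One cosmetic point: the raw coefficient extraction on the left gives $[z^{k-r_{n+1}}]$ rather than $[z^{r_{n+1}-k}]$, and you are silently using the $z\leftrightarrow z^{-1}$ symmetry of $\prod_{i=1}^n\chi_{r_i}$ (which you do mention) to rewrite it; making that substitution explicit would remove any ambiguity. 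Your closing remark about Kostant's branching formula for the diagonal $\SU(2)\hookrightarrow\SU(2)^{n+1}$ is apt: that is essentially the route taken in \cite{WallachYacobi09}, where the passage from the one-variable partition count to $\mathcal P_{\Sigma'}$ in $\C^{n+1}$ requires exactly the kind of resummation you describe, so your direct character computation is indeed the cleaner path.
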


Write $\intervalo =\{m\in\Z: 1\leq m\leq n\}$, and for any $I\subset \intervalo$, set $\beta_I= \sum_{i\in I}\varepsilon_i$.
The following elementary lemma will be very useful in the sequel.

\begin{lemma} \label{lem:P_Sigma'}
We have that 
$
\sum\limits_{I\subset \intervalo}\mathcal P_{\Sigma'} ( \nu -\beta_I) = \mathcal P_{\Sigma'}( 2\nu )
$
for every $\nu \in\C^{n+1}$. 
\end{lemma}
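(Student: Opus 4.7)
The plan is a direct bijective argument. First, I would unpack the definition: writing $\alpha_i^\pm := \varepsilon_i \pm \varepsilon_{n+1}$ and $\nu = \sum_{i=1}^{n+1}\nu_i\varepsilon_i$, a combination $\sum_i (a_i^+\alpha_i^+ + a_i^-\alpha_i^-)$ evaluates to $\sum_{i=1}^n(a_i^++a_i^-)\varepsilon_i + \sum_{i=1}^n(a_i^+-a_i^-)\varepsilon_{n+1}$, so
\[
\mathcal{P}_{\Sigma'}(\nu)
= \#\bigl\{(a_i^+,a_i^-)_{i=1}^n \in \N_0^{2n} : a_i^+ + a_i^- = \nu_i\ \text{for every}\ i\leq n,\ \textstyle\sum_{i=1}^n(a_i^+-a_i^-)=\nu_{n+1}\bigr\}.
\]
Both sides of the claimed identity vanish unless $\nu_1,\dots,\nu_n\in\N_0$, so this is the only case that needs attention.

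Next, identifying each $I\subset\intervalo$ with its indicator $(\epsilon_i)_{i=1}^n\in\{0,1\}^n$, the left-hand side becomes the count of tuples $(a_i^+,a_i^-,\epsilon_i)_{i=1}^n$ with $a_i^\pm\in\N_0$, $\epsilon_i\in\{0,1\}$, $a_i^++a_i^-+\epsilon_i=\nu_i$ for each $i$, and $\sum_i(a_i^+-a_i^-)=\nu_{n+1}$. The right-hand side $\mathcal{P}_{\Sigma'}(2\nu)$ counts pairs $(c_i^+,c_i^-)_{i=1}^n\in\N_0^{2n}$ with $c_i^++c_i^-=2\nu_i$ and $\sum_i(c_i^+-c_i^-)=2\nu_{n+1}$.

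I would then close by exhibiting the bijection $(a_i^+,a_i^-,\epsilon_i)\leftrightarrow(c_i^+,c_i^-)$ defined by $c_i^\pm := 2a_i^\pm+\epsilon_i$. In the forward direction, $c_i^++c_i^- = 2(a_i^++a_i^-+\epsilon_i) = 2\nu_i$ and $c_i^+-c_i^- = 2(a_i^+-a_i^-)$, so both constraints carry over. For the inverse, since $c_i^++c_i^-=2\nu_i$ is even, the integers $c_i^+$ and $c_i^-$ share a common parity; setting $\epsilon_i$ equal to that common parity and $a_i^\pm:=(c_i^\pm-\epsilon_i)/2$ returns elements of $\N_0$ that manifestly invert the forward map. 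This is a routine bookkeeping identity; the only point needing articulation is exactly the parity observation just used (two nonnegative integers summing to an even number must be simultaneously even or simultaneously odd), so there is no substantive obstacle.
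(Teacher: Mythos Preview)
Your argument is correct and follows essentially the same bijective approach as the paper's: both construct the map $(a_i^+,a_i^-,\epsilon_i)\mapsto(2a_i^++\epsilon_i,\,2a_i^-+\epsilon_i)$ and invert it via the observation that $c_i^+$ and $c_i^-$ share a parity because $c_i^++c_i^-=2\nu_i$ is even. The only differences are cosmetic---the paper writes $a_i,b_i,c_i,d_i$ and indexes by the subset $I$ rather than by its indicator vector $(\epsilon_i)$.
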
	
\begin{proof}
Write $u^{\pm}_i=\varepsilon_i\pm \varepsilon_{n+1}$ for $i=1,\dots,n$. Let 
\begin{align}
\mathcal A&=\left\{ (a_1,b_1,\dots,a_n,b_n)\in \N_0^{2n}:\textstyle \sum\limits_{i=1}^n a_i u_i^+ + b_iu_i^-=\nu-\beta_I \text{ for some } I\subset \intervalo \right\};\\
\mathcal B&=\left\{ (c_1,d_1,\dots,c_n,d_n) \in \N_0^{2n}: \textstyle \sum\limits_{i=1}^n c_i u_i^+ + d_iu_i^-=2\nu \right\}.
\end{align}
We will prove that there exists a bijective correspondence between $\mathcal A$ and $\mathcal B$.

For $(a_1,b_1,\dots,a_n,b_n) \in\mathcal A$, we have $a_i+b_i=\nu_i-1$ for all $i\in I$, $a_i+b_i=\nu_i$ for all $i\notin I$, and $\sum_{i=1}^n a_i-b_i=\nu_{n+1}$, for some $I \subset\intervalo$.
Define
\begin{align}
c_i=&
\begin{cases} 
2a_i+1  &\quad\mbox{if } i\in I,\\
2a_i  &\quad\mbox{if } i\notin I;
\end{cases} & \text{and} & &
d_i=&
\begin{cases} 
2b_i+1 & \quad\mbox{if } i\in I,\\
2b_i  &\quad\mbox{if } i\notin I.
\end{cases}
\end{align}
Hence, $(c_1,d_1,\dots,c_n,d_n) \in\mathcal B$, since  
 $2c_i+2d_i=2\nu_i$ for all $i$ and $\sum_{i=1}^n c_i-d_i=2\nu_{n+1}$.

Moreover, if $(c_1,d_1,\dots,c_n,d_n)\in\mathcal B$, then for each $i=1,\dots,n$ we have $c_i\equiv d_i \pmod 2$. This implies that the correspondence is bijective and the lemma follows.
\end{proof}

\subsection{Representations of U(3)} \label{subsec:U(3)}
We now fix the notation to parametrize the irreducible representations of $\U(3)=\{g\in \GL(3,\C): g^*g=I_3\}$.
Let $T'=\{\diag(e^{\mi \theta_1}, e^{\mi \theta_2}, e^{\mi \theta_3}) : \theta_j\in\R\;\forall j\}$ with associated Cartan subalgebra $\mathfrak h'$ of $\mathfrak u(3)$ given by  $\mathfrak h'= \{\diag({\theta_1}, {\theta_2}, {\theta_3}) : \theta_j\in\C\;\forall j\}$. 
Let $\varepsilon_j'\in (\mathfrak h')^*$ for $1\leq j\leq 3$ given by $\varepsilon_j'(\diag({\theta_1}, {\theta_2}, {\theta_3}))=\theta_j$. 

We consider the standard order given by the lexicographic order with respect to the ordered basis $\{\varepsilon_1',\varepsilon_2', \varepsilon_3'\}$. 
Thus, the irreducible representations of $\U(3)$ are in correspondence with $P^{++}(\U(3)) = \{\sum_{j=1}^3 \lambda_j'\varepsilon_j': a_j\in\Z\;\forall \, j,\; a_1\geq a_2\geq a_3\}$. 
For $\lambda'\in P^{++}(\U(3))$, let $\pi_{\lambda'}'$ denote the irreducible representation of $\U(3)$ with highest weight $\lambda'$.

\section{Type B case} \label{sec:casoB}
Throughout this section, we set
\begin{align*}
G&=\SO(2n+3), &K&=\SO(2n),&  H&=\SO(3),
\end{align*}
for any $n\geq2$.
We have that $\mathfrak g=\so(2n+3,\C)$ is a classical Lie algebra of type B$_{n+1}$.

\subsection{Root system notation for type B case} \label{subsec2n:notation}
We first fix compatible notation for the corresponding root systems associated to $G$, $K$, $H$, and $K\times H$. 
We pick the maximal torus of $G$ given by
\begin{equation}\label{eq2n:maximaltorus}
T:=\{\diag ( R(\theta_1), \dots, R(\theta_{n+1}),1 ): \theta_j\in\R \; \forall\, j\}, 
\end{equation}
where $R(\theta)=\left(\begin{smallmatrix}\cos\theta & \sin\theta\\ -\sin\theta & \cos\theta \end{smallmatrix}\right)$, whose associated Cartan subalgebra is given by
\begin{equation}
\mathfrak t:= \left\{
\diag\left( 
\left(\begin{smallmatrix}0& i\theta_1 \\ -i\theta_1& 0 \end{smallmatrix} \right)
,\dots,
\left(\begin{smallmatrix}0&i\theta_{n+1}\\ i\theta_{n+1}&0 \end{smallmatrix} \right),0 \right) : \theta_j\in \C\;\forall j\right\}. 
\end{equation}
For any $1\leq j\leq n+1$, let $\varepsilon_j\in \mathfrak t^*$ given by $\varepsilon_j(X)=\theta_j$ for $X$ in $\mathfrak t$ as above. 
Then, the set of roots is 
$
\Phi(\mathfrak g,\mathfrak t)= \{\pm \varepsilon_i\pm\varepsilon_j: 1\leq i<j\leq n+1\}\cup \{\pm \varepsilon_j: 1\leq j\leq n+1\}. 
$

The maximal torus $T\cap K$ of $K$  satisfies $(\mathfrak k\cap \mathfrak t)^* = \op{span}_\C\{\varepsilon_1,\dots,\varepsilon_{n}\}$, thus $\Phi(\mathfrak k,\mathfrak t)= \{\pm \varepsilon_i\pm\varepsilon_j: 1\leq i<j\leq n\}$. 
Similarly, $T\cap H$ is a maximal torus in $H$ satisfying that $(\mathfrak h\cap \mathfrak t)^*=\op{span}_\C \{\varepsilon_{n+1}\}$, thus $\Phi(\mathfrak h,\mathfrak t)= \{\pm \varepsilon_{n+1}\}$.

We fix the order on $\mathfrak t^*$ given by the lexicographic order with respect to the ordered basis $\{\varepsilon_1, \dots,\varepsilon_{n+1}\}$. 
We have compatible order on $(\mathfrak k\cap \mathfrak t)^*$ and $(\mathfrak h\cap \mathfrak t)^*$. 
Then 
\begin{align}
\label{eq2n:positiverootsG}
\Phi^+(\mathfrak g,\mathfrak t) &= \{ \varepsilon_i\pm\varepsilon_j: 1\leq i<j\leq n+1\}\cup \{\varepsilon_j: 1\leq j\leq n+1\},\\
\label{eq2n:positiverootsKH}
\Phi^+(\mathfrak k,\mathfrak t) &= \{ \varepsilon_i\pm\varepsilon_j: 1\leq i<j\leq n\}, \qquad
\Phi^+(\mathfrak h,\mathfrak t) = \{\varepsilon_{n+1}\},
\end{align}
\begin{align}
P(G) &= \oplus_{j=1}^{n+1} \Z\varepsilon_j,&
P^{++}(G) &= \{\textstyle\sum_{j=1}^{n+1} \lambda_j\varepsilon_j\in P(G): \lambda_1\geq \dots\geq \lambda_{n+1}\geq 0\}, \\
P(K) &= \oplus_{j=1}^{n} \Z\varepsilon_j,&
P^{++}(K) &= \{\textstyle\sum_{j=1}^{n} \lambda_j\varepsilon_j\in P(K): \lambda_1\geq \dots\geq \lambda_{n-1}\geq |\lambda_{n}|\},\\
P(H) &=  \Z\varepsilon_{n+1},&
P^{++}(H) &= \{k\varepsilon_{n+1}\in P(H): k\geq0\},
\end{align}
\begin{align}
	\rho_{\mathfrak g} &:= \sum_{i=1}^{n+1} (n+\tfrac32-i)\varepsilon_i, &
	\rho_{\mathfrak k} &:= \sum_{i=1}^{n} (n-i)\varepsilon_i, &
	\rho_{\mathfrak h} &:= \tfrac12\varepsilon_{n+2}.
\end{align}
We will denote by $\pi_\lambda, \sigma_\mu,\tau_{k\varepsilon_{n+1}}$ the irreducible representations of $G$, $K$ and $H$ with highest weights $\lambda\in P^{++}(G)$, $\mu\in P^{++}(K)$, and $k\varepsilon_{n+1}\in P^{++}(H)$, respectively. 
We will abbreviate $\tau_k=\tau_{k\varepsilon_{n+1}}$.

We now describe the Weyl group $W_{\mathfrak g}$. 
Any element $\omega\in W_{\mathfrak g}$ is of the form $\omega=s p$, with $p$ a permutation of the $n+1$ coordinates and $s$ a multiplication by $-1$ on a subset of coordinates. 
For $1\leq i\leq n+1$, write $s_i:\mathfrak t^*\to \mathfrak t^*$ the reflexion with respect to the axis $i$, that is, $s_i(\varepsilon_i)= -\varepsilon_i$ and $s_i(\varepsilon_j)= \varepsilon_j$ for all $j\neq i$.

We consider the inner product $\langle\cdot,\cdot\rangle$ on $\mathfrak g$ given by $\langle X,Y\rangle =\tfrac12 \tr(XY)$. 
With respect to its extension to $\mathfrak t^*$,  $\{\varepsilon_1,\dots,\varepsilon_{n+1}\}$ is an orthonormal basis.

\subsection{Main theorem for type B case}
The main result in this section is the following.

\begin{theorem}\label{thm2n:mainthm}
Let $n\geq2$, $G=\SO(2n+3)$, $K=\SO(2n)$, $H=\SO(3)$, $\lambda=\sum_{i=1}^{n+1}\lambda_i \varepsilon_i\in P^{++}(G)$, $\mu=\sum_{i=1}^n\mu_i \varepsilon_i\in P^{++}(K)$.
If $\mu$ simply interlaces $\lambda$, i.e.\ $\lambda_i\geq |\mu_i|\geq \lambda_{i+1}$ for $1\leq i\leq n$, then
\begin{equation}\label{eq2n:tensores}
\Hom_K({\sigma_\mu},{\pi_\lambda}) \simeq \tau_{\lambda_{n+1}} \otimes\,  \bigotimes_{j=1}^n \left( \bigoplus_{m=0}^{\lfloor (\lambda_j-|\mu_j|)/2\rfloor }  \tau_{\lambda_j-|\mu_j|-2m} \right)
\end{equation}
as $H$-modules. 
\end{theorem}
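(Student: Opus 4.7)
The plan is to match, for each $k \in \N_0$, the multiplicity of $\tau_k$ on the two sides of \eqref{eq2n:tensores}. On the left this is $\dim \Hom_{K \times H}(\sigma_\mu \otimes \tau_k, \pi_\lambda)$, which I would compute by applying Kostant's Branching Formula \eqref{eq:Kostant} to $G \supset K \times H$. The crucial point is that the torus $T$ fixed in \eqref{eq2n:maximaltorus} is simultaneously a maximal torus of $G$ and of $K \times H$, so one may take $\mathfrak s = \mathfrak t$ in \eqref{eq:Kostant} and the overline operator is trivial. A direct reading off of \eqref{eq2n:positiverootsG}--\eqref{eq2n:positiverootsKH} yields
$\Sigma = \{\varepsilon_i \pm \varepsilon_{n+1}: 1 \leq i \leq n\} \cup \{\varepsilon_j : 1 \leq j \leq n\},$
whose first piece coincides (after the obvious identification of bases) with $\Sigma'$ from \eqref{eq:Sigma'}. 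Separating the contributions of the single roots $\varepsilon_j$ gives the factorization
$\mathcal P_\Sigma(\nu) = \sum_{c_1, \ldots, c_n \geq 0} \mathcal P_{\Sigma'}(\nu - c_1\varepsilon_1 - \cdots - c_n\varepsilon_n),$
a sum with only finitely many non-zero terms.

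\textbf{The right-hand side.} For each fixed choice of summand $\tau_{\lambda_j - |\mu_j| - 2m_j}$ in the $j$-th factor of \eqref{eq2n:tensores}, the resulting $(n+1)$-fold tensor product is an $H$-module whose $\tau_k$-multiplicity is computed by Theorem~\ref{thm:ClebschGordan} (reinterpreting $\SO(3)$-irreducibles in the $\SU(2)$ labeling of Section~\ref{subsec:Clebsh-Gordan}), producing a difference of two values of $\mathcal P_{\Sigma'}$. Summing over $\vec m = (m_1, \ldots, m_n)$ with $0 \leq m_j \leq \lfloor (\lambda_j - |\mu_j|)/2 \rfloor$, the step-two sum in each coordinate should be foldable via Lemma~\ref{lem:P_Sigma'} into a sum over subsets $I \subset \intervalo$. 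This should reduce the right-hand multiplicity to an explicit expression in values of $\mathcal P_{\Sigma'}$, structurally parallel to the $\vec c$-decomposition produced above.

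\textbf{Matching and main obstacle.} The core difficulty lies in the Weyl-group cancellation on the left-hand side. Kostant's formula yields the alternating sum
$\sum_{w \in W_\mathfrak{g}} \op{sgn}(w)\, \mathcal P_\Sigma\bigl(w(\lambda + \rho_{\mathfrak g}) - \rho_{\mathfrak g} - \mu - k\varepsilon_{n+1}\bigr)$
over a group of order $2^{n+1}(n+1)!$, and massive cancellation must occur. I expect the simple interlacing hypothesis $\lambda_i \geq |\mu_i| \geq \lambda_{i+1}$ to force $\mathcal P_\Sigma$ to vanish outside a small subset of $W_\mathfrak{g}$, heuristically those $w$ that realign each $\lambda_i$ with the adjacent $|\mu_i|$ while allowing independent sign changes on the last coordinate. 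Once the surviving Weyl elements are isolated and the summands decomposed via the $\vec c$-splitting above, the final identification with the right-hand side should follow from another application of Lemma~\ref{lem:P_Sigma'}, translating the $\{0,1\}$-indexed sums produced by sign flips into the even-step sums produced by the tensor product. Pinning down this cancellation pattern and bookkeeping the combinatorial identity between the two resulting $\mathcal P_{\Sigma'}$-expressions is the principal technical challenge.
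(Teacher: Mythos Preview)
Your strategy is exactly the paper's. The paper confirms your expectation on the Weyl cancellation: after reducing to $\mu_n\geq 0$ via the outer automorphism of $\SO(2n)$, under the interlacing hypothesis precisely two Weyl elements survive in \eqref{eq2n:Kostant}, namely $w=1$ and $w=s_{n+1}$, yielding the difference $\mathcal P_\Sigma(\lambda-\mu-k\varepsilon_{n+1})-\mathcal P_\Sigma(\lambda-\mu+(k+1)\varepsilon_{n+1})$.

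One small correction to your bookkeeping: the subset $I$ does not arise from sign flips but from splitting your $\vec c$-sum by parity (write $c_j=2\gamma_j+[j\in I]$); Lemma~\ref{lem:P_Sigma'} is then applied on the \emph{left-hand} side to collapse the $I$-sum into values $\mathcal P_{\Sigma'}(2(\cdot))$, and these match term by term the output of Theorem~\ref{thm:ClebschGordan} on the right-hand side, where the doubling of arguments comes from the $\SO(3)\leftrightarrow\SU(2)$ relabeling $\tau_k\leftrightarrow\tau_{2k/2}$. With that adjustment your outline is the paper's proof.
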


The absolute value on every $\mu_i$ simplifies the notation, though $|\mu_i|=\mu_i$ for all $i<n$. 

Kostant's Branching Formula \ref{eq:Kostant} will be the main tool to prove Theorem~\ref{thm2n:mainthm}. 
We next give the first steps to apply it to the symmetric space $G/(K\times H)$.
The maximal torus $T$ in $G$ (see \eqref{eq2n:maximaltorus}) is also a maximal torus in $K\times H$. 
In particular, the restriction denoted by a bar is the identity operator on $\mathfrak t^*$. 
Furthermore, $\Phi^+(\mathfrak k\times\mathfrak h,\mathfrak t)=\Phi^+(\mathfrak k,\mathfrak t\cap\mathfrak k)\cup \Phi^+(\mathfrak h,\mathfrak t \cap\mathfrak h)$.
From \eqref{eq2n:positiverootsG} and \eqref{eq2n:positiverootsKH}, it follows that
\begin{equation}
\Sigma=  \{\varepsilon_i\pm \varepsilon_{n+1}: 1\leq i\leq n\} \cup \{\varepsilon_i: 1\leq i\leq n\},
\end{equation}
each element with multiplicity one. 
Clearly, for $\nu \in\mathfrak t^*$, 
\begin{equation}\label{eq2n:P_Sigma}
\mathcal P_\Sigma(\nu)>0 
\quad \Longrightarrow \quad
\nu_i:= \langle\varepsilon_i,\nu \rangle \in \Z \quad\forall i,\; \nu_i\geq0 \quad\forall \,1\leq i\leq n, \; |\nu_{n+1}|\leq \textstyle \sum\limits_{i=1}^n\nu_i. 
\end{equation}

Let $\lambda=\sum_{i=1}^{n+1}\lambda_i \varepsilon_i\in P^{++}(G)$, $\mu=\sum_{i=1}^n\mu_i \varepsilon_i\in P^{++}(K)$, and $k\varepsilon_{n+1}\in P^{++}(H)$. 
We obtain from \eqref{eq:Kostant} that the number of times that $\sigma_\mu\otimes \tau_k\in \widehat {K\times H}$ occurs in $\pi_\lambda|_{K\times H}$ is given by 
\begin{equation}\label{eq2n:Kostant}
\dim \Hom_{K\times H}({\sigma_\mu}\otimes {\tau_k},{\pi_\lambda})
= \sum_{\omega \in W_{\mathfrak g}} \op{sgn}(\omega) \;  \mathcal P_{\Sigma} \big( \, \overline{ \omega(\lambda+\rho_{\mathfrak g})- \rho_{\mathfrak g}} -\mu- k\varepsilon_{n+1}\big).
\end{equation}

The next lemmas will indicate the set of elements $\omega\in W_{\mathfrak g}$ such that the $\omega$-th term in the above formula does not vanish.

\begin{lemma}\label{lem2n:Weyl1}
Assume $\mu_n\geq0$. 
If $\omega \in W_{\mathfrak g}$ satisfies that the $\omega$-th term in \eqref{eq2n:Kostant} is non-zero, then $\omega=p $ or $\omega= s_{n+1}p$, for some permutation $p$. 
\end{lemma}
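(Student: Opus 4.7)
The plan is to use the necessary conditions on the support of $\mathcal P_\Sigma$ recorded in \eqref{eq2n:P_Sigma}: any $\omega$ contributing a non-zero term to \eqref{eq2n:Kostant} must produce an argument whose coefficients on $\varepsilon_1,\dots,\varepsilon_n$ are all non-negative. I would parametrize $\omega=sp\in W_{\mathfrak g}$ as a signed permutation, writing $p$ for the underlying permutation of $\{1,\dots,n+1\}$ and setting $\epsilon_j=-1$ if $\omega$ flips the sign of $\varepsilon_j$ and $\epsilon_j=+1$ otherwise, so that $\omega(\varepsilon_i)=\epsilon_{p(i)}\varepsilon_{p(i)}$. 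Letting $q=p^{-1}$ and $a_i=\lambda_i+n+\tfrac32-i$, one has $\omega(\lambda+\rho_{\mathfrak g})=\sum_j a_{q(j)}\epsilon_j\varepsilon_j$.

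Since $T$ is a maximal torus both in $G$ and in $K\times H$, the restriction bar is the identity, and the coefficient on $\varepsilon_j$ of
$$\overline{\omega(\lambda+\rho_{\mathfrak g})-\rho_{\mathfrak g}}-\mu-k\varepsilon_{n+1}$$
equals $\epsilon_j a_{q(j)}-(n+\tfrac32-j)-\mu_j$ for $1\leq j\leq n$. Non-vanishing of $\mathcal P_\Sigma$ on this vector forces this quantity to be $\geq 0$ for each such $j$. The key observation is then: $a_i\geq a_{n+1}=\lambda_{n+1}+\tfrac12>0$ for every $i$, the constant $n+\tfrac32-j\geq \tfrac32$ for $j\leq n$, and $\mu_j\geq 0$ for all $j\leq n$ — indeed, $\mu_n\geq 0$ by hypothesis, and for $j<n$ dominance in $P^{++}(K)$ gives $\mu_j\geq\mu_{n-1}\geq|\mu_n|\geq 0$.

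Consequently, if $\epsilon_j=-1$ for some $j\leq n$, the corresponding coefficient $-a_{q(j)}-(n+\tfrac32-j)-\mu_j$ is strictly negative, contradicting \eqref{eq2n:P_Sigma}. Hence $\epsilon_j=+1$ for every $j\in\{1,\dots,n\}$, so the only coordinate on which $\omega$ may flip sign is $n+1$, yielding $\omega=p$ or $\omega=s_{n+1}p$ as claimed. There is no serious obstacle here: the whole argument is a sign/positivity bookkeeping. The only point requiring any care is confirming that the hypothesis $\mu_n\geq 0$ (rather than merely $|\mu_n|\geq 0$) is exactly what is needed to kill the sign flip on the $n$-th coordinate, which is why the lemma is stated with this assumption.
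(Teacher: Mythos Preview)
Your argument is correct and follows essentially the same approach as the paper's proof: both use the positivity constraint \eqref{eq2n:P_Sigma} on the $j$-th coordinate of the argument of $\mathcal P_\Sigma$ to rule out any sign flip at a coordinate $j\leq n$, reaching a contradiction from the strict positivity of $a_{q(j)}$, of $n+\tfrac32-j$, and (using the hypothesis $\mu_n\geq 0$) of $\mu_j$. The only difference is cosmetic---you write out the coordinates explicitly via $q=p^{-1}$, while the paper phrases the same inequality as $0<\langle\varepsilon_j,\rho_{\mathfrak g}+\mu+k\varepsilon_{n+1}\rangle\leq -\langle\varepsilon_j,p(\lambda+\rho_{\mathfrak g})\rangle\leq 0$.
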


\begin{proof}
Write $\omega =sp$ as above. 
Suppose that $s(\varepsilon_j)=-\varepsilon_j$ for some $1\leq j\leq n$.
Since $\mathcal P_{\Sigma}(\omega(\lambda+\rho_{\mathfrak g})-\rho_{\mathfrak g}-\mu-k\varepsilon_{n+1})>0$ by assumption, \eqref{eq2n:P_Sigma} forces $0\leq \langle \varepsilon_j, \omega(\lambda+\rho_{\mathfrak g})-\rho_{\mathfrak g}-\mu-k\varepsilon_{n+1}\rangle$, so $0<\langle \varepsilon_j, \rho_{\mathfrak g}+\mu+k\varepsilon_{n+1}\rangle \leq \langle \varepsilon_j, \omega(\lambda+\rho_{\mathfrak g}) \rangle = \langle \varepsilon_j, sp(\lambda+\rho_{\mathfrak g}) \rangle = -\langle \varepsilon_j, p(\lambda+\rho_{\mathfrak g}) \rangle \leq 0$, which is a contradiction. 
Hence, $s(\varepsilon_j)=\varepsilon_j$ for all $1\leq j\leq n$ and the claim follows. 
\end{proof}

\begin{lemma}\label{lem2n:Weyl2}
Assume $\lambda_i\geq \mu_i\geq \lambda_{i+1}$ for all $1\leq i\leq n$. 
If $w\in W_{\mathfrak g}$ satisfies that the $\omega$-th term in \eqref{eq2n:Kostant} is non-zero, then $\omega=1$ or $\omega=s_{n+1}$.
\end{lemma}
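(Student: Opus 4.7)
My plan is to build on Lemma~\ref{lem2n:Weyl1} and then use the interlacing hypothesis to rule out any nontrivial permutation part by inspecting the first $n$ coordinates of the argument of $\mathcal P_\Sigma$.

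First, the interlacing hypothesis gives $\mu_n\geq\lambda_{n+1}\geq 0$, so in particular $\mu_n\geq 0$ and Lemma~\ref{lem2n:Weyl1} applies. Hence $\omega = p$ or $\omega = s_{n+1}p$ for some permutation $p$ of the $n+1$ coordinates. In either case the factor $s_{n+1}$ affects only the last coordinate, so a direct computation using $\rho_{\mathfrak g}=\sum_{i=1}^{n+1}(n+\tfrac32-i)\varepsilon_i$ yields
\[
\nu_i := \big\langle \varepsilon_i,\, \omega(\lambda+\rho_{\mathfrak g}) - \rho_{\mathfrak g} - \mu - k\varepsilon_{n+1} \big\rangle = \lambda_{p^{-1}(i)} - \mu_i + i - p^{-1}(i), \qquad 1\leq i\leq n.
\]

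The main step is to show that if $p\neq 1$, then $\nu_{i_0}<0$ for some $1\leq i_0\leq n$, whence the $\omega$-th term vanishes by \eqref{eq2n:P_Sigma}. I would let $i_0$ be the smallest index with $p^{-1}(i_0)\neq i_0$; if no such index lay in $\{1,\dots,n\}$, bijectivity would force $p^{-1}(n+1)=n+1$ and hence $p=1$, a contradiction. So $i_0\leq n$, and by minimality $p^{-1}(i)=i$ for $i<i_0$, which together with injectivity of $p^{-1}$ forces $p^{-1}(i_0)\geq i_0+1$. The interlacing bound then gives $\lambda_{p^{-1}(i_0)}\leq \lambda_{i_0+1}\leq \mu_{i_0}$, so $\lambda_{p^{-1}(i_0)}-\mu_{i_0}\leq 0$, while $i_0 - p^{-1}(i_0)\leq -1$. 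Adding these, $\nu_{i_0}\leq -1<0$, as required.

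Therefore $p=1$ and $\omega\in\{1, s_{n+1}\}$, proving the lemma. The only real content is the strict gap $i_0-p^{-1}(i_0)\leq -1$ coupled with the interlacing inequality $\mu_{i_0}\geq \lambda_{i_0+1}$; everything else is bookkeeping, and I do not anticipate any genuine obstacle.
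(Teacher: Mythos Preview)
Your proof is correct and follows essentially the same approach as the paper's: both reduce to Lemma~\ref{lem2n:Weyl1} and then use the interlacing inequalities on the first $n$ coordinates to force the permutation part to be trivial. Your presentation is in fact slightly cleaner, since you handle all indices $1\le i_0\le n$ uniformly via the minimal-index argument, whereas the paper runs the induction only up to $i_0=n-1$ and then treats the transposition $p_{n,n+1}$ as a separate case; but this is a cosmetic difference, not a substantive one.
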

\begin{proof}
By Lemma~\ref{lem2n:Weyl1}, we may assume that $w=sp$ with $p$ a permutation and $s=s_{n+1}$ or $s=1$. 
Let $l$ be an index distinct to $n+1$ and let $r$ be the index such that $p(\varepsilon_r)=\varepsilon_l$. 
Since $\mathcal P_{\Sigma}(\omega(\lambda+\rho_{\mathfrak g})-\rho_{\mathfrak g}-\mu-k\varepsilon_{n+1})>0$ by assumption, \eqref{eq2n:P_Sigma} gives $0\leq \langle \varepsilon_l, \omega(\lambda+\rho_{\mathfrak g})-\rho_{\mathfrak g}-\mu-k\varepsilon_{n+1}\rangle = \langle \varepsilon_r, \lambda+\rho_{\mathfrak g}\rangle -\langle \varepsilon_l,\rho_{\mathfrak g}+\mu+k\varepsilon_{n+1}\rangle = \lambda_r-\mu_l+(l-r)$. 
We conclude that $\lambda_r\geq \mu_l +r-l$. 
		
If $l=1$, then $r=1$. 
Indeed, if $r\geq2$, then $\lambda_2\geq \lambda_r\geq \mu_1+r-1>\lambda_2$ by assumption, which is a contradiction. 
Suppose $l\leq n-1$ and $\omega(\varepsilon_i)=\varepsilon_i$ for all $1\leq i<l$, and let $r$ be an index such that $\omega(\varepsilon_r)=\varepsilon_l$. 
Clearly, $r\geq l$. 
If $r\geq l+1$, then $\lambda_{l+1}\geq \lambda_r\geq \mu_l+r-l>\mu_l\geq \lambda_{l+1}$ by assumption, which is again a contradiction. 
We conclude that $r=l$. 
We have shown that $\omega(\varepsilon_i)=\varepsilon_i$ for all $1\leq i\leq n-1$.
In other words, $\omega=s$ or $\omega=sp_{n,n+1}$ where $p_{n,n+1}$ preserves $\varepsilon_i$ for all $1\leq i\leq n-1$ and switches $\varepsilon_n$ and $\varepsilon_{n+1}$. 
If $\omega=sp_{n,n+1}$, then $0\leq \langle \varepsilon_n, \omega(\lambda+\rho_{\mathfrak g})-\rho_{\mathfrak g}-\mu-k\varepsilon_{n+1}\rangle= \lambda_{n+1}-1-\mu_n$ which is impossible when $\mu_n\geq0$ because $|\mu_n|\geq \lambda_{n+1}$.
This completes the proof. 
\end{proof}

We now show that it is sufficient to prove Theorem~\ref{thm2n:mainthm} for $\mu_n\geq0$. 
We set $\widetilde\mu=\sum_{i=1}^{n-1} \mu_i\varepsilon_i- \mu_n\varepsilon_n$ for any $\mu=\sum_{i=1}^{n} \mu_i\varepsilon_i \in P^{++}(K)$. 
Note $\widetilde \mu \in P^{++}(K)$.

\begin{lemma}\label{lem2n:mu_n>0}
For any $\lambda\in P^{++}(G)$, $\mu\in P^{++}(K)$, $k\geq0$, $\sigma_{\mu}\otimes \tau_k$ and $\sigma_{\widetilde \mu}\otimes \tau_k$ occur in $\pi_{\lambda}|_{K\times H}$ the same number of times. 
\end{lemma}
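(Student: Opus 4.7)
The plan is to exhibit an element $g\in G$ whose inner conjugation restricts to an automorphism $\varphi_K\times \varphi_H$ of $K\times H$, where $\varphi_K$ is the outer automorphism of $K=\SO(2n)$ sending $\sigma_\mu$ to $\sigma_{\widetilde\mu}$ and $\varphi_H$ is an \emph{inner} automorphism of $H=\SO(3)$ (and hence fixes every $\tau_k$). Since $\pi_\lambda\circ\op{Ad}(g)\simeq \pi_\lambda$ as $G$-modules, this will immediately give the claimed equality of multiplicities.

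The first step is to take $g=\diag(I_{2n-1},-1,I_2,-1)$, which has determinant $1$ and preserves the block decomposition $\R^{2n+3}=\R^{2n}\oplus\R^{3}$, so it normalises $K\times H$. Conjugation by $g$ restricts to conjugation by $g_0:=\diag(I_{2n-1},-1)\in\Ot(2n)\setminus\SO(2n)$ on $K$ and by $h_0:=\diag(I_2,-1)\in\Ot(3)\setminus\SO(3)$ on $H$. Writing $h_0=(-I_3)\cdot \diag(-I_2,1)$ with $-I_3$ central in $\Ot(3)$ and $\diag(-I_2,1)\in\SO(3)$, one concludes that $\varphi_H$ is inner and hence $\tau_k\circ\varphi_H\simeq\tau_k$.

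The second step identifies $\sigma_\mu\circ\varphi_K$. The direct calculation $\diag(1,-1)R(\theta)\diag(1,-1)=R(-\theta)$ shows that $\varphi_K$ fixes the first $n-1$ rotation blocks of $T\cap K$ and flips the sign of $\theta_n$; dually, $\varphi_K$ acts on $(\mathfrak t\cap\mathfrak k)^*$ as the reflection $s_n\colon \varepsilon_n\mapsto -\varepsilon_n$. Note that $\rho_{\mathfrak k}=\sum_{i=1}^{n-1}(n-i)\varepsilon_i$ is $s_n$-fixed, and that $W_{\mathfrak k}=W(D_n)$ is normal in $W(B_n)$, so $s_n$ normalises $W_{\mathfrak k}$. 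Applying $s_n$ to both the numerator and denominator of the Weyl character formula for $\chi_{\sigma_\mu}$ and reindexing the summation by $w\mapsto s_nws_n$ produces the Weyl character formula evaluated at $s_n\mu=\widetilde\mu$. Since $\widetilde\mu\in P^{++}(K)$ (the dominance condition $\mu_{n-1}\geq |\mu_n|$ is insensitive to the sign of $\mu_n$), this shows $\sigma_\mu\circ\varphi_K\simeq\sigma_{\widetilde\mu}$.

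Combining the three ingredients gives $[\pi_\lambda:\sigma_\mu\otimes\tau_k]=[\pi_\lambda\circ\op{Ad}(g):\sigma_\mu\otimes\tau_k]=[\pi_\lambda:(\sigma_\mu\circ\varphi_K)\otimes(\tau_k\circ\varphi_H)]=[\pi_\lambda:\sigma_{\widetilde\mu}\otimes\tau_k]$, as required. The main obstacle is the second step: although the action of the Dynkin diagram automorphism of $D_n$ on highest weights is classical, what is needed here is its \emph{concrete realisation} by conjugation inside $G=\SO(2n+3)$, which is precisely what the particular choice of $g$ delivers while simultaneously keeping the induced automorphism of $\SO(3)$ inner.
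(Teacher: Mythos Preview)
Your proof is correct and follows essentially the same strategy as the paper's: both conjugate by an orthogonal matrix carrying a $-1$ in the $(2n)$-th slot to realise the outer automorphism of $K=\SO(2n)$ swapping $\sigma_\mu$ and $\sigma_{\widetilde\mu}$. The only difference is cosmetic: the paper uses $g_0=\diag(I_{2n-1},-1,I_3)\in\Ot(2n+3)\smallsetminus G$ (so $\varphi|_H=\op{id}$ is immediate, while $\pi_\lambda\circ\varphi\simeq\pi_\lambda$ rests on $\SO(2n+3)$ having no outer automorphisms), whereas you insert a second $-1$ to land inside $G$ (so $\pi_\lambda\circ\op{Ad}(g)\simeq\pi_\lambda$ is automatic, and the small extra work is checking that $\varphi_H$ is inner, which you do).
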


\begin{proof}
We set $g_0\in \diag(1,\dots,1,-1,1,1,1) \in \Ot(2n+3)$. 
Although $g_0$ is not in $G$, the map $\varphi: x \mapsto g_0xg_0$ is an automorphism of $G$. 
It turns out that $\pi_\lambda\circ\varphi \simeq\pi_\lambda$, $\sigma_\mu\circ\varphi|_K \simeq \sigma_{\widetilde \mu}$, and $\tau_k\circ\varphi|_H = \tau_k$. 
It follows immediately that $\Hom_{K\times H}({\sigma_\mu}\otimes {\tau_k}, {\pi_\lambda}) \simeq \Hom_{K\times H}({\sigma_{\widetilde \mu}}\otimes {\tau_k}, {\pi_\lambda})$ as complex vector spaces, as asserted. 
\end{proof}

We are now in position to prove the main theorem of this section.

\begin{proof}[Proof of Theorem~\ref{thm2n:mainthm}]
To establish the isomorphism of $H$-modules in \eqref{eq2n:tensores}, we will show that each irreducible representation $\tau_k$ of $H$ occurs in both sides with the same multiplicity. 
For the right-hand side, we will use Theorem~\ref{thm:ClebschGordan}. 
For the left-hand side, we use that 
\begin{equation}\label{eq2n:branchingKxH}
\dim \Hom_H({\tau_k}, \Hom_K({\sigma_\mu},{\pi_\lambda})) = \dim \Hom_{K\times H}({\sigma_\mu}\otimes {\tau_k} ,{\pi_\lambda}),
\end{equation}
and then we apply Kostant's Branching Formula  to $G/(K\times H)$. 
Note that \eqref{eq2n:branchingKxH} and Lemma~\ref{lem2n:mu_n>0} allow us to assume $\mu_n\geq0$ for the rest of the proof.

Lemmas~\ref{lem2n:Weyl1} and \ref{lem2n:Weyl2} tell us that the only non-zero terms in \eqref{eq2n:Kostant} are given by $\omega=1$ and $\omega= s_{n+1}$. 
The term corresponding to the last choice is equal to
\begin{multline}
\mathcal P_{\Sigma} \big( \overline{ s_{n+1} (\lambda+\rho_{\mathfrak g})- \rho_{\mathfrak g}} -\mu- k\varepsilon_{n+1}\big)
=\mathcal P_{\Sigma} \Big({\textstyle \sum\limits_{i=1}^n\lambda_i\varepsilon_i}  -\mu- (\lambda_{n+1}+k+1)\varepsilon_{n+1}\Big)
\\
=\mathcal P_{\Sigma} \Big(s_{n+1} \big({\textstyle\sum\limits_{i=1}^n\lambda_i\varepsilon_i}  -\mu+ (\lambda_{n+1}+k+1)\varepsilon_{n+1} \big)\Big)
=\mathcal P_{\Sigma} \big(\lambda-\mu+ (k+1)\varepsilon_{n+1}\big).
\end{multline}
The last identity follows by $\mathcal P_{\Sigma}(s_{n+1}v)=\mathcal P_{\Sigma}(v)$ for all $v$. 
We have established so far that 
\begin{multline} 
\dim \Hom_{H}({\tau_k} ,\Hom_{K}({\sigma_\mu}, {\pi_\lambda})) 
= \mathcal P_{\Sigma}(\lambda-\mu-k\varepsilon_{n+1}) - \mathcal P_{\Sigma} (\lambda-\mu+ (k+1)\varepsilon_{n+1}). 
\end{multline}
Since $\Sigma'=\{\varepsilon_i\pm \varepsilon_{n+1}: 1\leq i\leq n\}\subset\Sigma$, it follows that the previous expression becomes
\begin{multline}
\sum_{\nu_1=0}^{\lambda_1-\mu_1} \dots \sum_{\nu_n=0}^{\lambda_n-\mu_n} \Big( \mathcal  P_{\Sigma'} \big(\lambda-\mu-k\varepsilon_{n+1}- \nu \big) 
- \mathcal P_{\Sigma'}  \big(\lambda-\mu+ (k+1)\varepsilon_{n+1} - \nu \big) \Big) \\
= \sum_{\gamma_1=0}^{\lfloor \frac{\lambda_1-\mu_1}{2}\rfloor} \dots \sum_{\gamma_n=0}^{\lfloor \frac{\lambda_n-\mu_n}{2}\rfloor} \left( \sum_{I\subset\intervalo} \mathcal P_{\Sigma'} \big(\lambda-\mu-k\varepsilon_{n+1} -2\gamma - \beta_I \big) 
\right. \\ \left.
- \sum_{I\subset\intervalo} \mathcal P_{\Sigma'} \big(\lambda-\mu + (k+1)\varepsilon_{n+1}-2\gamma-\beta_I \big) \right).
\end{multline}
Here and subsequently, we write $\nu={\textstyle\sum_{i=1}^n \nu_i\varepsilon_i}$ and $\gamma={\textstyle \sum_{i=1}^n \gamma_i\varepsilon_i}$, whenever $\nu_i$ and $\gamma_i$ for $1\leq i\leq n$ are determined. 
The notation for $\beta_I$ was introduced before Lemma~\ref{lem:P_Sigma'}. 
Such lemma implies 
\begin{multline}\label{eq2n:tauLHS}
\dim \Hom_{H}({\tau_k} ,\Hom_{K}({\sigma_\mu}, {\pi_\lambda}))
\\ =
\sum_{\gamma_1=0}^{\lfloor \frac{\lambda_1-\mu_1}{2}\rfloor} \dots \sum_{\gamma_n=0}^{\lfloor \frac{\lambda_n-\mu_n}{2}\rfloor} \Big(\mathcal P_{\Sigma'} \big(2(\lambda-\mu-k\varepsilon_{n+1} -2\gamma)  \big) 
- \mathcal P_{\Sigma'} \big(2(\lambda-\mu+ (k+1)\varepsilon_{n+1}-2\gamma) \big) \Big).
\end{multline}

On the other hand, the right-hand side of \eqref{eq2n:tensores} is 
\begin{equation}\label{eq2n:RHS}
\bigoplus_{\gamma_1=0}^{\lfloor \frac{\lambda_1-\mu_1}{2}\rfloor} 
\dots \bigoplus_{\gamma_n=0}^{\lfloor \frac{\lambda_n-\mu_n}{2}\rfloor}
\tau_{\lambda_1-\mu_1-2\gamma_1}\otimes\dots \otimes \tau_{\lambda_n-\mu_n-2\gamma_n} \otimes \tau_{\lambda_{n+1}}.
\end{equation}
One has that $H=\SO(3)\simeq \SU(2)/\{\pm 1\}$. 
The irreducible representation $\tau_{k/2}$ of $\SU(2)$ with $k$ odd does not descend to a representation of $\SO(3)$.
Furthermore, the $(2k+1)$-dimensional representation $\tau_{k}$ of $\SU(2)$ descends to $\SO(3)$.
Therefore, Theorem~\ref{thm:ClebschGordan} ensures that the number of times that $\tau_k$ occurs in the factor $\tau_{\lambda_1-\mu_1-2\gamma_1}\otimes\dots \otimes \tau_{\lambda_n-\mu_n-2\gamma_n} \otimes \tau_{\lambda_{n+1}}$ is equal to 
\begin{equation}
\textstyle \mathcal P_{\Sigma'}\left( 2\big(\lambda-\mu-2\gamma \big) -2k\varepsilon_{n+1}\right) 
- \mathcal P_{\Sigma'}\left( 2\big(\lambda-\mu -2\gamma\big)+ (2k+2)\varepsilon_{n+1}\right).
\end{equation}
It follows that the number of times that $\tau_k$ occurs in \eqref{eq2n:RHS} coincides with \eqref{eq2n:tauLHS}, as asserted. 
\end{proof}

\subsection{Prescribed highest weight end for type B case}
The aim in this subsection is to prove Theorem~\ref{thm2n:ending}. 
Roughly speaking, it is a decomposition of the multiplicity space $\Hom_K({\sigma_{\mu}},{\pi_\lambda})$ as $H$-module when the ending coefficients of $\lambda$ coincide with a part of the coefficients of $\mu$.

\begin{theorem}\label{thm2n:ending}
Let $G=\SO(2n+3)$, $K=\SO(2n)$, and $H=\SO(3)$ for any $n\geq2$.
For $\mu =\sum_{i=1}^{n}\mu_i \varepsilon_i\in P^{++}(K)$ and $\lambda=\sum_{i=1}^{n+1} \lambda_i\varepsilon_i \in P^{++}(G)$ with $\lambda_{i+3}=\mu_i$ for all $1\leq i\leq n-2$ and $\mu_{n-1}\leq \lambda_{n+1}$, we have that
\begin{equation}
\Hom_K(\sigma_\mu,\pi_\lambda)\simeq 
\pi'_{\lambda'}|_H\otimes \left(\bigoplus_{j=|\mu_n|}^{\mu_{n-1}} \tau_j\right)
\end{equation}
as $H$-modules, where $\pi'_{\lambda'}$ denotes the irreducible representation of $\U(3)$ with highest weight $\lambda': = \lambda_1\varepsilon_1'+\lambda_2\varepsilon_2'+ \lambda_3\varepsilon_3'$ (see Subsection~\ref{subsec:U(3)}). 
\end{theorem}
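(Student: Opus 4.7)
The plan is to adapt the Kostant-based strategy used in the proof of Theorem~\ref{thm2n:mainthm}, with Knapp's duality playing the crucial role in identifying the decomposition. I would first apply Lemma~\ref{lem2n:mu_n>0} to reduce to the case $\mu_n\geq 0$ (the right-hand side of the theorem is invariant, since only $|\mu_n|$ appears there), and then invoke~\eqref{eq2n:Kostant} to write, for every $k\geq 0$,
\[
	\dim\Hom_H\bigl(\tau_k,\Hom_K(\sigma_\mu,\pi_\lambda)\bigr) = \sum_{\omega\in W_{\mathfrak g}}\op{sgn}(\omega)\,\mathcal P_\Sigma\bigl(\omega(\lambda+\rho_{\mathfrak g})-\rho_{\mathfrak g}-\mu-k\varepsilon_{n+1}\bigr).
\]
The goal is to show that this expression equals the multiplicity of $\tau_k$ in $\pi'_{\lambda'}|_H\otimes\bigoplus_{j=|\mu_n|}^{\mu_{n-1}}\tau_j$, which then yields the desired $H$-module isomorphism.

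The next step is to isolate the contributing Weyl elements. Lemma~\ref{lem2n:Weyl1} already confines the sign component of $\omega=sp$ to $s\in\{1,s_{n+1}\}$. For the permutation $p$, the matching-tail hypothesis $\lambda_{i+3}=\mu_i$ for $1\leq i\leq n-2$, combined with the non-negativity constraints $\lambda_{p^{-1}(l)}\geq \mu_l+p^{-1}(l)-l$ (necessary for $\mathcal P_\Sigma>0$), forces $p$ to fix $\varepsilon_l$ for $4\leq l\leq n-2$ and to act only on the six coordinates $\{\varepsilon_1,\varepsilon_2,\varepsilon_3,\varepsilon_{n-1},\varepsilon_n,\varepsilon_{n+1}\}$. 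The auxiliary hypothesis $\mu_{n-1}\leq\lambda_{n+1}$ is precisely what controls the edge cases at $l=n-1,n$, where the matching condition runs out.

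The surviving Kostant sum then factors into two character-type quantities on the torus of $H$. The first, depending only on the ``free'' coordinates $\{\varepsilon_1,\varepsilon_2,\varepsilon_3,\varepsilon_{n+1}\}$, is precisely the Kostant sum for an auxiliary branching of $\SO(7)$ to $\SO(4)\times\SO(3)$ with $\SO(7)$-highest weight $(\lambda_1,\lambda_2,\lambda_3)$; by Knapp's duality~\cite{Knapp01} this coincides with the character of the $H$-representation $\pi'_{\lambda'}|_H$, where $\lambda'=\lambda_1\varepsilon_1'+\lambda_2\varepsilon_2'+\lambda_3\varepsilon_3'$ and $\pi'_{\lambda'}\in\widehat{\U(3)}$. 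The second quantity, depending on $\{\varepsilon_{n-1},\varepsilon_n\}$, reproduces the character of the classical branching $\sigma_{(\mu_{n-1},\mu_n)}^{\SO(4)}|_{\SO(3)}=\bigoplus_{j=|\mu_n|}^{\mu_{n-1}}\tau_j$. Multiplying the two characters and comparing coefficients of each $\tau_k$ on both sides will finish the argument.

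The main obstacle is the detailed Weyl-group analysis in the second paragraph: under the matching-tail hypothesis the permutation $p$ is no longer forced to be the identity (as it was in Lemma~\ref{lem2n:Weyl2}), so a much richer subgroup of $W_{\mathfrak g}$ contributes, and one must verify that the combined signed sum factors cleanly into the promised product. The identification of the first factor with $\pi'_{\lambda'}|_H$ is precisely where Knapp's $\U(3)$--$\SO(3)$ duality enters, replacing an intrinsically $\SO(7)$-valued Kostant computation by the much simpler $\U(3)$-to-$\SO(3)$ restriction announced in the introduction.
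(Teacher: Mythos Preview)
Your plan diverges from the paper's route, and there is a concrete gap in the Weyl-group step. The paper remarks, immediately after stating Theorem~\ref{thm2n:ending}, that Kostant's formula \eqref{eq2n:Kostant} is \emph{not} adequate here because too many terms survive; instead the proof uses Tsukamoto's character identity \eqref{eq2n:Tsukamotocharacters}. Under the matching-tail hypothesis $\lambda_{i+3}=\mu_i$ ($1\le i\le n-2$) together with $\mu_{n-1}\le\lambda_{n+1}$, the interlacing constraints $\max(\mu_i,\lambda_{i+2})\le a_i\le\min(\mu_{i-1},\lambda_i)$ force $a_i=\lambda_{i+2}$ for $2\le i\le n-1$, leaving only $a_1\in[\lambda_3,\lambda_1]$ and $a_n\in[\mu_n,\mu_{n-1}]$ free. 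The character then factors at once as the coefficient of $\chi_{\sigma_0}$ in $\chi_{\pi_{\lambda'}|_{K\times H}}$ times $\sum_{j=\mu_n}^{\mu_{n-1}}\chi_{\tau_j}$, and Knapp's duality identifies the first factor with $\pi'_{\lambda'}|_H$.

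Your Kostant-based argument fails at the claim that $p$ must fix $\varepsilon_l$ for $4\le l\le n-2$. The non-negativity constraint you write down, $\lambda_{p^{-1}(l)}\ge \mu_l+p^{-1}(l)-l=\lambda_{l+3}+p^{-1}(l)-l$, only yields $p^{-1}(l)\le l+2$, not $p^{-1}(l)=l$. Concretely, for $n\ge6$ and any $\lambda$ with $\lambda_5>\lambda_7$, the transposition $p=p_{4,5}$ already contributes a non-zero term in \eqref{eq2n:Kostant}: coordinate $4$ of $p(\lambda+\rho_{\mathfrak g})-\rho_{\mathfrak g}-\mu$ equals $\lambda_5-1-\mu_4=\lambda_5-1-\lambda_7\ge0$, and all remaining coordinates are non-negative as well. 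More generally the permutations with non-vanishing term are (at least) all those satisfying $p^{-1}(l)\le l+2$ for each $l\le n-2$, a family whose cardinality grows with $n$; the signed Kostant sum does not reduce to a six-coordinate problem and does not visibly factor in the way you describe. This is exactly the obstruction the authors cite when they abandon Kostant in favour of Tsukamoto's formula.
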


We note that the condition $\mu_{n-1}\leq \lambda_{n+1}$ is redundant unless $n=2$.

It turns out that Kostant's branching formula is not adequate to prove this result, since the number of non-zero terms in the formula \eqref{eq2n:Kostant} is too high. 
We will use the branching law from $G$ to $K\times H$ given by Tsukamoto~\cite{Tsukamoto05}.
We first recall such a result.

Fix $\lambda\in P^{++}(G)$. 
It will be convenient to set $\lambda_{n+2}=\lambda_{n+3}=0$ and $a_0=\mu_0=\lambda_1$.
Furthermore, for $\mu\in P^{++}(K)$, let $\widetilde \sigma_\mu$ denote the representation of $K$ given by $\widetilde \sigma_\mu = \sigma_\mu$ if $\mu_n=0$ and $\widetilde \sigma_\mu = \sigma_\mu\oplus \sigma_{\widetilde \mu}$ otherwise.

Tsukamoto established in the proof of Theorem~1 in \cite{Tsukamoto05} that
\begin{equation}\label{eq2n:Tsukamotocharacters}
\chi_{\pi_\lambda|_{K\times H}} = 
\frac{1}{(e^{\frac12{\varepsilon_{n+1}}}-  e^{-\frac12{\varepsilon_{n+1}}})} 
\sum_{\mu} \;\chi_{\widetilde \sigma_\mu}\;
\sum_{(a_1,\dots,a_n)} \frac{\prod\limits_{i=1}^{n+1} (e^{l_i\varepsilon_{n+1}}-e^{-l_i\varepsilon_{n+1}})} {(e^{\varepsilon_{n+1}}-  e^{-\varepsilon_{n+1}})^n  } ,
\end{equation}
where the first sum is over every $\mu\in P^{++}(K)$ \emph{triply interlacing} $\lambda$, that is, $\lambda_i\geq \mu_i\geq \lambda_{i+3}$ for all $1\leq i\leq n$, the second sum is over the $n$-tuples $(a_1,\dots,a_n)\in\Z^n$ satisfying that $a_1 \geq \dots \geq  a_n\geq0$ and $\max(\mu_i,\lambda_{i+2}) \leq a_i\leq \min(\mu_{i-1},\lambda_i)$ for all $1\leq i\leq n$, and the parameters $l_1,\dots,l_{n+1}$ are given by 
\begin{equation}\label{eq2n:l_i}
\begin{cases}
l_i = \min(\lambda_{i},a_{i-1}) - \max(\lambda_{i+1}, a_{i})+1 & \quad\text{for all $1\leq i\leq n$,}\\
l_{n+1} = \min(\lambda_{n+1}, a_n)+1/2.
\end{cases}
\end{equation}

Since $\chi_{\tau_k} = \xi_H(k\varepsilon_{n+1}+\rho_{\mathfrak h})/\xi_H(\rho_{\mathfrak h})$ by \eqref{eq:Weylcharacter}, with $\xi_H(k\varepsilon_{n+1}+\rho_{\mathfrak h})= \xi_H((k+\frac12)\varepsilon_{n+1})= e^{(k+\frac12)\varepsilon_{n+1}}-e^{-(k+\frac12)\varepsilon_{n+1}}$ and $\xi_H(\rho_{\mathfrak h})= e^{\frac12{\varepsilon_{n+1}}}-  e^{-\frac12{\varepsilon_{n+1}}}$,   
Tsukamoto thus obtained the following implicit branching rule from $G=\SO(2n+3)$ to $K\times H=\SO(2n) \times \SO(3)$.

\begin{theorem}\cite[Theorem~1]{Tsukamoto05}\label{thm2n:Tsukamoto}
Let $\lambda\in P^{++}(G)$, $\mu\in P^{++}(K)$, and $k\varepsilon_{n+1}\in P^{++}(H)$. 
If $\lambda_i\geq \mu_i\geq \lambda_{i+3}$ for all $1\leq i\leq n-1$ and $\lambda_n\geq |\mu_n|$, then the number of times that $\sigma_\mu\otimes \tau_k$ occurs in $\pi_\lambda|_{K\times H}$ is given by $m_{k}$, where the coefficients $m_p$ for $p\geq0$ are defined by
\begin{equation}\label{eq2n:Tsukamoto}
\sum_{(a_1,\dots,a_n)} \frac{\prod\limits_{i=1}^{n+1} (e^{l_i\varepsilon_{n+1}}-e^{-l_i\varepsilon_{n+1}})} {(e^{\varepsilon_{n+1}}-  e^{-\varepsilon_{n+1}})^n}  = \sum_{p\geq0} m_p \; \big(e^{(p+\frac12)\varepsilon_{n+1}}-e^{-(p+\frac12)\varepsilon_{n+1}}\big),
\end{equation}
where the sum at the left is over the $n$-tuples $(a_1,\dots,a_n)\in\Z^n$ satisfying that $a_1 \geq \dots \geq  a_n\geq0$, $\max(\mu_i,\lambda_{i+2}) \leq a_i\leq \min(\mu_{i-1},\lambda_i)$ for all $1\leq i\leq n-1$, $\max(|\mu_n|,\lambda_{n+2}) \leq a_n\leq \min(\mu_{n-1},\lambda_n)$, and $l_1,\dots,l_{n+1}$ are given by \eqref{eq2n:l_i}. 
Otherwise, $\sigma_\mu\otimes \tau_k$ does not occur in $\pi_\lambda|_{K\times H}$. 
\end{theorem}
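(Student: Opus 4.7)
The plan is to derive Theorem~\ref{thm2n:Tsukamoto} directly from Tsukamoto's character identity \eqref{eq2n:Tsukamotocharacters} by matching coefficients, first with respect to the irreducible characters of $K$ and then with respect to those of $H$. The identity \eqref{eq2n:Tsukamotocharacters} already encodes the full $K\times H$-decomposition of $\pi_\lambda|_{K\times H}$; what remains is to identify the coefficient of $\chi_{\sigma_\mu}$ for a given $\mu\in P^{++}(K)$ and to recognize it as the Weyl-denominator quotient appearing in \eqref{eq2n:Tsukamoto}.

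First, since $\{\chi_{\sigma_{\mu'}}\}_{\mu'\in P^{++}(K)}$ is linearly independent over $\C[P(H)]^{W_{\mathfrak h}}$, the coefficient of $\chi_{\sigma_\mu}$ on each side of \eqref{eq2n:Tsukamotocharacters} is unambiguously defined. The outer sum in \eqref{eq2n:Tsukamotocharacters} ranges over $\mu'\in P^{++}(K)$ with $\mu'_n\geq0$ satisfying $\lambda_i\geq\mu'_i\geq\lambda_{i+3}$ for $1\leq i\leq n$, and $\widetilde\sigma_{\mu'}$ contributes $\chi_{\sigma_{\mu'}}$ if $\mu'_n=0$ and $\chi_{\sigma_{\mu'}}+\chi_{\sigma_{\widetilde{\mu'}}}$ otherwise. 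Hence, for a fixed $\mu\in P^{++}(K)$, exactly one index $\mu'$ in the sum contributes $\chi_{\sigma_\mu}$: namely $\mu'=\mu$ when $\mu_n\geq0$ and $\mu'=\widetilde\mu$ when $\mu_n<0$. In both cases $\mu'_i=\mu_i$ for $1\leq i<n$ and $\mu'_n=|\mu_n|$, so the triply interlacing conditions on $\mu'$ become $\lambda_i\geq\mu_i\geq\lambda_{i+3}$ for $1\leq i\leq n-1$ together with $\lambda_n\geq|\mu_n|$ (the inequality $|\mu_n|\geq\lambda_{n+3}=0$ is automatic), matching the hypothesis of the theorem. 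If these conditions fail, no such $\mu'$ exists and the multiplicity vanishes, giving the final assertion. Moreover, the substitution $\mu'\mapsto\mu$ transforms Tsukamoto's constraint $\max(\mu'_n,\lambda_{n+2})\leq a_n\leq\min(\mu'_{n-1},\lambda_n)$ into $\max(|\mu_n|,\lambda_{n+2})\leq a_n\leq\min(\mu_{n-1},\lambda_n)$, reproducing the range in Theorem~\ref{thm2n:Tsukamoto}.

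Assuming the hypotheses hold, extracting the $\chi_{\sigma_\mu}$-coefficient of \eqref{eq2n:Tsukamotocharacters} therefore yields
\[
\sum_{p\geq0}\dim\Hom_{K\times H}(\sigma_\mu\otimes\tau_p,\pi_\lambda)\,\chi_{\tau_p} \;=\; \frac{1}{\xi_H(\rho_{\mathfrak h})}\sum_{(a_1,\dots,a_n)}\frac{\prod_{i=1}^{n+1}\bigl(e^{l_i\varepsilon_{n+1}}-e^{-l_i\varepsilon_{n+1}}\bigr)}{(e^{\varepsilon_{n+1}}-e^{-\varepsilon_{n+1}})^n}.
\]
Each factor $e^{l_i\varepsilon_{n+1}}-e^{-l_i\varepsilon_{n+1}}$ is anti-invariant under $W_{\mathfrak h}=\{1,s_{n+1}\}$, so the numerator of the right-hand side admits a unique expansion $\sum_{p\geq0}m_p\bigl(e^{(p+1/2)\varepsilon_{n+1}}-e^{-(p+1/2)\varepsilon_{n+1}}\bigr)$ in the basis of half-integer $W_{\mathfrak h}$-anti-invariant exponentials; this is precisely the defining relation \eqref{eq2n:Tsukamoto} of the integers $m_p$. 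Dividing through by $\xi_H(\rho_{\mathfrak h})$ and applying the Weyl character formula \eqref{eq:Weylcharacter} for $\SO(3)$ identifies the right-hand side with $\sum_{p\geq0}m_p\,\chi_{\tau_p}$, and linear independence of $\{\chi_{\tau_p}\}_{p\geq0}$ yields $\dim\Hom_{K\times H}(\sigma_\mu\otimes\tau_k,\pi_\lambda)=m_k$, as claimed. The principal subtlety is the bookkeeping around the sign of $\mu_n$: the fact that $\widetilde\sigma_{\mu'}$ packages $\mu'$ and $\widetilde{\mu'}$ symmetrically is what produces the uniform $|\mu_n|$ in the bounds of Theorem~\ref{thm2n:Tsukamoto}, bridging Tsukamoto's indexing (with $\mu'_n\geq0$) and the general statement.
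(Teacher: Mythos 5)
Your proof is correct and follows essentially the same route the paper sketches: read off the coefficient of $\chi_{\sigma_\mu}$ in Tsukamoto's character identity \eqref{eq2n:Tsukamotocharacters} and convert the resulting $H$-part to an expansion in $\SO(3)$-characters via the Weyl character formula \eqref{eq:Weylcharacter}. You also correctly fill in the bookkeeping the paper leaves implicit, namely that the $\widetilde\sigma_{\mu'}$ packaging makes the $\mu_n<0$ case reduce to $\mu'=\widetilde\mu$ and thereby produces the uniform $|\mu_n|$ in the interlacing hypothesis and in the range for $a_n$.
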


\begin{proof}[Proof of Theorem~\ref{thm2n:ending}]
We will assume throughout the proof that $\mu_n\geq0$, which is possible by Lemma~\ref{lem2n:mu_n>0}.
We will first show that 
\begin{equation}\label{eq2n:reduccion}
\Hom_K(\sigma_\mu,\pi_\lambda)\simeq \Hom_K(\sigma_0,\pi_{\lambda'})\otimes \left(\bigoplus_{j=\mu_n}^{\mu_{n-1}} \tau_j\right)
\end{equation}
as $H$-modules, where $\lambda'=\lambda_1\varepsilon_1+ \lambda_2\varepsilon_2 + \lambda_3\varepsilon_3$. 
To do that, we will check that the term accompanying $\chi_{\sigma_\mu}$ in $\chi_{\pi_\lambda}$ coincides with the term accompanying $\chi_{\sigma_0}$ in $\chi_{\pi_{\lambda'}}$ times $\sum_{j=\mu_n}^{\mu_{n-1}} \chi_{\tau_j}$.

From \eqref{eq2n:Tsukamotocharacters}, the term accompanying $\chi_{\sigma_\mu}$ in $\chi_{\pi_\lambda}$ equals 
\begin{equation}
\frac{1}{\xi_H(\rho_{\mathfrak h})}
\sum_{(a_1,\dots,a_n)} \frac{\prod\limits_{i=1}^{n+1} (e^{l_i\varepsilon_{n+1}}-e^{-l_i\varepsilon_{n+1}})} {(e^{\varepsilon_{n+1}}-  e^{-\varepsilon_{n+1}})^n  },
\end{equation}
where the sum is over $(a_1,\dots,a_n)\in\Z^n$ satisfying that $a_1 \geq \dots \geq  a_n\geq0$ and $\max(\mu_i,\lambda_{i+2}) \leq a_i\leq \min(\mu_{i-1},\lambda_i)$ for all $1\leq i\leq n$, and the parameters $l_1,\dots,l_{n+1}$ are given by \eqref{eq2n:l_i}. 
By assumption, $\lambda_{i+3} = \mu_i$ for all $1\leq i\leq n-2$. 
This implies that the sum reduces to $(a_1,\dots,a_n)\in\Z^n$ satisfying that $\lambda_3\leq a_1\leq \lambda_1$, $\mu_n \leq a_n\leq \mu_{n-1}$, and $a_i=\lambda_{i+2}$ for all $1\leq i\leq n-1$, thus $l_1=\lambda_1+1-\max(\lambda_2,a_1)$, $l_2=\min(\lambda_2,a_1)-\lambda_3+1$, $l_i=1$ for all $3\leq i\leq n$, $l_{n+1}=a_n+1/2$. 
Hence, the previous expression becomes
\begin{multline}
\sum_{a_1= \lambda_3}^{\lambda_1}   \frac{(e^{l_1\varepsilon_{n+1}}-e^{-l_1\varepsilon_{n+1}}) (e^{l_2\varepsilon_{n+1}}-e^{-l_2\varepsilon_{n+1}}) } {(e^{\varepsilon_{n+1}}-  e^{-\varepsilon_{n+1}})^2}  \; \sum_{a_n=\mu_n}^{\mu_{n-1}}\, \frac{(e^{l_{n+1}\varepsilon_{n+1}}-e^{-l_{n+1}\varepsilon_{n+1}})} {\xi_H(\rho_{\mathfrak h})}  \\
= \left(\sum_{a_1= \lambda_3}^{\lambda_1} \,  \frac{(e^{l_1\varepsilon_{n+1}}-e^{-l_1\varepsilon_{n+1}}) (e^{l_2\varepsilon_{n+1}}-e^{-l_2\varepsilon_{n+1}}) } {(e^{\varepsilon_{n+1}}-  e^{-\varepsilon_{n+1}})^2} \right)\left(\sum_{a_n=\mu_n}^{\mu_{n-1}} \,\chi_{\tau_{a_n}} \right),
\end{multline}
where $l_1=\lambda_1+1-\max(\lambda_2,a_1)$ and  $l_2=\min(\lambda_2,a_1)-\lambda_3+1$.
We conclude that the proof of \eqref{eq2n:reduccion} follows by checking that the second term in the right-hand side of the last expression  coincides with the term accompanying $\chi_{\sigma_0}$ in $\chi_{\pi_{\lambda'}}$. 
This can be easily checked by \eqref{eq2n:Tsukamotocharacters} since $\lambda_{i+3}'=\mu_i=0$ for all $i\geq 1$. 
Indeed, one obtains $\lambda_3\leq a_1\leq \lambda_1$, $a_i=0$ for all $2\leq i\leq n$, $l_1=\lambda_1+1-\max(\lambda_2,a_1)$,  $l_2=\min(\lambda_2,a_1)-\lambda_3+1$, $l_i=1$ for all $3\leq i\leq n$, and $l_{n+1} = 1/2$. 

He have established so far the identity \eqref{eq2n:reduccion}. 
Now, by using Knapp's duality \cite{Knapp01}, the multiplicity space $\Hom_K(\sigma_0,\pi_{\lambda'})$ is isomorphic as an $H$-module to the restriction of the representation $\pi_{\lambda'}'$ of $\U(3)$ to $H=\SO(3)$. 
This completes the proof. 
\end{proof}

\begin{remark}
The branching law from $\U(3)$ to $\SO(3)$ has been thoroughly studied.
It states that (see for instance \cite[(1.18)]{GalbraithLouck91}) the number of times that $\tau_k$ occurs in $\pi_{\lambda'}'|_{\SO(3)}$ with $\lambda'=\sum_{j=1}^3 \lambda_j'\varepsilon_j'$, is given by 
\begin{equation}\label{eq:U(3)toSO(3)}
	\begin{cases}
		0 
		& \quad \text{if } 0\leq p \leq k-1,\\
		\lceil\tfrac{p-k+1}{2}\rceil - \lceil\tfrac{p-k-q}{2}\rceil
		& \quad \text{if } k\leq p\leq 2k,\quad 0\leq q \leq p-k,\\
		\lceil\tfrac{p-k+1}{2}\rceil 
		& \quad \text{if } k\leq p\leq 2k,\quad p-k\leq q \leq k,\\
		\lceil\tfrac{p-k+1}{2}\rceil - \lceil\tfrac{q-k}{2}\rceil
		& \quad \text{if } k\leq p\leq 2k,\quad k\leq q ,\\
		\lceil\tfrac{p-k+1}{2}\rceil - \lceil\tfrac{p-k-q}{2}\rceil
		& \quad \text{if } 2k\leq p,\quad 0\leq q \leq k,\\
		\lceil\tfrac{p-k+1}{2}\rceil - \lceil\tfrac{p-k-q}{2}\rceil - \lceil\tfrac{q-k}{2}\rceil
		& \quad \text{if } 2k\leq p,\quad k\leq q \leq p-k,\\
		\lceil\tfrac{p-k+1}{2}\rceil - \lceil\tfrac{q-k}{2}\rceil
		& \quad \text{if } 2k\leq p,\quad p-k\leq q ,
	\end{cases}
\end{equation} 
where $p=\lambda_1'-\lambda_3'$,  $q=\lambda_2'-\lambda_3'$, and $\lceil\cdot \rceil$ stands for the ceiling function (i.e.\ $\lceil x \rceil$ is the least integer greater than or equal to $x$).
Now, Theorem~\ref{thm2n:ending} and \eqref{eq:U(3)toSO(3)} give an explicit expression for the number of times that $\sigma_\mu\otimes \tau_k$ occurs in $\pi_{\lambda}|_{K\times H}$ for $\lambda$ and $\mu$ as in Theorem~\ref{thm2n:ending}. 
\end{remark}

\begin{remark}
Knapp's duality \cite{Knapp01} was already present in \cite{GrossKunze84} (cf.\ second-to-last paragraph in \S2 of \cite{HoweTanWillenbring05}). 
\end{remark}

\section{Type D case} \label{sec:casoD}
We consider in this section the case $d=2n+1$, thus for any $n\geq1$, we set 
\begin{align*}
	G&=\SO(2n+4), &K&=\SO(2n+1),&  H&=\SO(3).
\end{align*}
The Lie algebra $\mathfrak g=\so(2n+4,\C)$ is a classical Lie algebra of type D$_{n+2}$.
There are several similarities with the previous case considered in Section~\ref{sec:casoB}, so we will omit many details.

\subsection{Root system notation for type D case} \label{subsec2n+1:notation}
We pick the maximal torus
\begin{align}\label{eq2n+1:toromaximal}
	T&:=\{\diag ( R(\theta_1), \dots, R(\theta_{n+2}) ): \theta_j\in\R \; \forall\, j\}, 
\\
\mathfrak t &:= \left\{
\diag\left( 
\left(\begin{smallmatrix}0& i\theta_1 \\ -i\theta_1& 0 \end{smallmatrix} \right)
,\dots,
\left(\begin{smallmatrix}0&i\theta_{n+2}\\ i\theta_{n+2}&0 \end{smallmatrix} \right) \right) : \theta_j\in \C\;\forall j\right\}. 
\end{align}
We define $\varepsilon_j\in \mathfrak t^*$ by $\varepsilon_j(X)=\theta_j$ for $X$ in $\mathfrak t$ as above, for $1\leq j\leq n+2$. 
Then, $\Phi(\mathfrak g,\mathfrak t)= \{\pm \varepsilon_i\pm\varepsilon_j: 1\leq i<j\leq n+2\}$.

The maximal torus $T\cap K$ of $K$ satisfy $(\mathfrak k\cap \mathfrak t)^* = \op{span}_\C \{\varepsilon_1,\dots,\varepsilon_{n}\}$, thus $\Phi(\mathfrak k,\mathfrak t)= \{\pm \varepsilon_i\pm\varepsilon_j: 1\leq i<j\leq n\}\cup \{\pm\varepsilon_i: 1\leq i\leq n\}$. 
Similarly, the maximal torus $T\cap H$ in $H$ satisfies that $(\mathfrak h\cap \mathfrak t)^*=\op{span}_\C \{\varepsilon_{n+2}\}$ and $\Phi(\mathfrak h,\mathfrak t)= \{\pm \varepsilon_{n+2}\}$.

We pick compatible orders in $\mathfrak t^*$, $(\mathfrak k\cap \mathfrak t)^*$ and $(\mathfrak h\cap \mathfrak t)^*$, determined by the lexicographic order with respect to the ordered basis $\{\varepsilon_1, \dots,\varepsilon_{n+2}\}$. 
Thus
\begin{align}
\label{eq2n+1:positiverootsG}
\Phi^+(\mathfrak g,\mathfrak t) &= \{ \varepsilon_i\pm\varepsilon_j: 1\leq i<j\leq n+2\}, \qquad
\Phi^+(\mathfrak h,\mathfrak t) = \{\varepsilon_{n+2}\},\\
\label{eq2n+1:positiverootsKH}
\Phi^+(\mathfrak k,\mathfrak t) &= \{ \varepsilon_i\pm\varepsilon_j: 1\leq i<j\leq n\}\cup \{\varepsilon_j: 1\leq j\leq n\},
\end{align}
\begin{align}
P(G) &= \oplus_{j=1}^{n+2} \Z\varepsilon_j,&
	P^{++}(G) &= \{\textstyle\sum_{j=1}^{n+2} \lambda_j\varepsilon_j\in P(G): \lambda_1\geq \dots\geq \lambda_{n+1}\geq |\lambda_{n+2}|\}, \\
P(K) &= \oplus_{j=1}^{n} \Z\varepsilon_j,&
	P^{++}(K) &= \{\textstyle\sum_{j=1}^{n} \lambda_j\varepsilon_j\in P(K): \lambda_1\geq \dots\geq \lambda_{n}\geq 0\},\\
P(H) &=  \Z\varepsilon_{n+1},&
	P^{++}(H) &= \{k\varepsilon_{n+2}\in P(H): k\geq0\},
\end{align}
\begin{align}
	\rho_{\mathfrak g} &:= \sum_{i=1}^{n+2} (n+2-i)\varepsilon_i, &
	\rho_{\mathfrak k} &:= \sum_{i=1}^{n} (n+\tfrac12-i)\varepsilon_i, &
	\rho_{\mathfrak h} &:= \tfrac12 \varepsilon_{n+2}.
\end{align}
For $\lambda\in P^{++}(G)$, $\mu\in P^{++}(K)$, and $k\varepsilon_{n+2}\in P^{++}(H)$, we denote by $\pi_\lambda, \sigma_\mu, \tau_{k}$ the corresponding irreducible representations of $G$, $K$ and $H$ respectively.

The Weyl group $W_{\mathfrak g}$ consists in elements $\omega=s p$, with $p$ a permutation of the $n+2$ coordinates and $s$ a multiplication by $-1$ on a subset of coordinates with even cardinality. 
We still denote by $s_i:\mathfrak t^*\to \mathfrak t^*$ the reflexion with respect to the axis $i$ like in Subsection~\ref{subsec2n:notation}.
Furthermore, we define $p_{i,j}$ to be the transposition of the coordinates $i$ and $j$. 

We consider the inner product $\langle\cdot,\cdot\rangle$ on $\mathfrak g$ given by $\langle X,Y\rangle = \tfrac12 \tr(XY)$. 
We extend it to $\mathfrak t^*$.
It turns out that $\{\varepsilon_1,\dots,\varepsilon_{n+2}\}$ is an orthonormal basis of $\mathfrak t^*$.

\subsection{Main theorem for type D case}
The main result in this section is the following.

\begin{theorem}\label{thm2n+1:mainthm} 
Let $n\geq1$, $G=\SO(2n+4)$, $K=\SO(2n+1)$, $H=\SO(3)$, $\lambda=\sum_{i=1}^{n+2}\lambda_i \varepsilon_i\in P^{++}(G)$, $\mu=\sum_{i=1}^n\mu_i \varepsilon_i\in P^{++}(K)$.
If $\mu$ simply interlaces $\lambda$, i.e.\ $\lambda_i\geq \mu_i\geq \lambda_{i+1}$ for $1\leq i\leq n$, then
\begin{equation}\label{eq2n+1:tensores}
\Hom_K(\sigma_\mu,\pi_\lambda) \simeq \left( \bigoplus_{k=|\lambda_{n+2}|}^{\lambda_{n+1}} \tau_{k} \right) \otimes \bigotimes_{m=1}^n \left( \bigoplus_{j=0}^{\lfloor (\lambda_m-\mu_m)/2\rfloor }  \tau_{\lambda_m-\mu_m-2j} \right)
\end{equation}
as $H$-modules. 
\end{theorem}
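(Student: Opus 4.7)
The plan is to mirror the proof of Theorem~\ref{thm2n:mainthm}: apply Kostant's branching formula \eqref{eq:Kostant} to $G/(K\times H)$, identify the Weyl group elements that contribute non-trivially, and match the resulting sum of partition function values with the Clebsch--Gordan count of $\tau_k$ in the right-hand side of \eqref{eq2n+1:tensores}. First I would reduce to $\lambda_{n+2}\geq 0$ by means of the outer automorphism $\varphi$ of $G$ realised by conjugation with $g_0=\diag(I_{2n+3},-1)\in \Ot(2n+4)$; one verifies that $\pi_\lambda\circ\varphi\simeq \pi_{\lambda^*}$, where $\lambda^*$ differs from $\lambda$ only in the sign of $\lambda_{n+2}$, while $\sigma_\mu\circ\varphi\simeq\sigma_\mu$ and $\tau_k\circ\varphi\simeq\tau_k$. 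Since the right-hand side of \eqref{eq2n+1:tensores} depends on $\lambda_{n+2}$ only through $|\lambda_{n+2}|$, the reduction is legitimate.

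Two features distinguish the type~D case from type~B: the maximal torus $T$ of $G$ strictly contains a maximal torus $S$ of $K\times H$, so the bar operator sends $\varepsilon_{n+1}\mapsto 0$, and $W_{\mathfrak g}$ only permits sign changes on subsets of even cardinality. A direct calculation using \eqref{eq2n+1:positiverootsG}--\eqref{eq2n+1:positiverootsKH} gives
\begin{equation*}
\Sigma=\{\varepsilon_i:1\leq i\leq n\}\cup\{\varepsilon_i\pm\varepsilon_{n+2}:1\leq i\leq n\}\cup\{-\varepsilon_{n+2}\},
\end{equation*}
each element of multiplicity one, and $\mathcal P_\Sigma(\eta)>0$ forces $\eta_i\in\Z_{\geq 0}$ for $1\leq i\leq n$. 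With this constraint and the interlacing hypothesis, the type~D analogues of Lemmas~\ref{lem2n:Weyl1} and~\ref{lem2n:Weyl2} restrict the non-vanishing Weyl terms to $\omega\in\{1,\,s_{n+1}s_{n+2},\,p_{n+1,n+2},\,s_{n+1}s_{n+2}p_{n+1,n+2}\}$, with signs $+,+,-,-$ respectively. The key point is that a sign change $s(\varepsilon_j)=-\varepsilon_j$ with $j\leq n$ would force a negative $\varepsilon_j$-component in the argument of $\mathcal P_\Sigma$, and the even-cardinality constraint leaves only $s_{n+1}s_{n+2}$ as a nontrivial $s$.

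The main obstacle is collapsing these four Kostant contributions into the clean form suggested by \eqref{eq2n+1:tensores}. Since $-\varepsilon_{n+2}\in\Sigma$, one has $\mathcal P_\Sigma(\eta)=\sum_{d\geq 0}\mathcal P_{\Sigma''}(\eta+d\varepsilon_{n+2})$ with $\Sigma''=\Sigma\setminus\{-\varepsilon_{n+2}\}$, so differences of $\mathcal P_\Sigma$'s at shifted arguments telescope. Pairing term $1$ with term $3$ gives $\sum_{l=\lambda_{n+2}}^{\lambda_{n+1}}\mathcal P_{\Sigma''}(\nu+(l-k)\varepsilon_{n+2})$ with $\nu=\sum_{i=1}^n(\lambda_i-\mu_i)\varepsilon_i$, and pairing term $2$ with term $4$, together with the reflection symmetry $\mathcal P_{\Sigma''}(\cdot-e\varepsilon_{n+2})=\mathcal P_{\Sigma''}(\cdot+e\varepsilon_{n+2})$, produces $-\sum_{l=\lambda_{n+2}}^{\lambda_{n+1}}\mathcal P_{\Sigma''}(\nu+(l+k+1)\varepsilon_{n+2})$. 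Their sum is indexed by a single $l\in\{\lambda_{n+2},\ldots,\lambda_{n+1}\}$, which accounts for the outer direct sum $\bigoplus_{k=|\lambda_{n+2}|}^{\lambda_{n+1}}\tau_k$ in \eqref{eq2n+1:tensores}. Finally, expanding $\mathcal P_{\Sigma''}(\eta)=\sum_{b_1=0}^{\eta_1}\cdots\sum_{b_n=0}^{\eta_n}\mathcal P_{\Sigma'}(\sum_i b_i\varepsilon_i+\eta_{n+2}\varepsilon_{n+2})$ and substituting $b_i=(\lambda_i-\mu_i)-\nu_i'$ casts the $l$-th summand in precisely the form handled in the proof of Theorem~\ref{thm2n:mainthm}, with $l$ in place of $\lambda_{n+1}$; Lemma~\ref{lem:P_Sigma'} followed by Theorem~\ref{thm:ClebschGordan} then identifies it with the multiplicity of $\tau_k$ in $\tau_l\otimes\bigotimes_{m=1}^n\bigl(\bigoplus_{j=0}^{\lfloor(\lambda_m-\mu_m)/2\rfloor}\tau_{\lambda_m-\mu_m-2j}\bigr)$, completing the proof after summing over $l$.
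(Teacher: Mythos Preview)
Your proposal is correct and follows essentially the same route as the paper: reduce to $\lambda_{n+2}\geq 0$ via the outer automorphism, apply Kostant's formula, cut down to the four Weyl elements $\{1,\,s_{n+1}s_{n+2},\,p_{n+1,n+2},\,s_{n+1}s_{n+2}p_{n+1,n+2}\}$, telescope and strip to reach $\mathcal P_{\Sigma'}$, and finish with Lemma~\ref{lem:P_Sigma'} and Theorem~\ref{thm:ClebschGordan}. The only cosmetic differences are that you telescope over $-\varepsilon_{n+2}$ before stripping the $\varepsilon_i$'s (the paper does these in the opposite order) and that your symbol $\Sigma''$ denotes $\Sigma\setminus\{-\varepsilon_{n+2}\}$, which clashes with the paper's own $\Sigma''=\Sigma'\cup\{-\varepsilon_{n+2}\}$; you should rename it to avoid confusion.
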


Similarly as in the previous section, we first make the first steps of applying Kostant's Branching Formula \eqref{eq:Kostant} to $(G, K\times H)$.
The maximal torus $S:=T\cap (K\times H)$ in $K\times H$ misses the $(n+1)$-th $2\times 2$-block in \eqref{eq2n+1:toromaximal}. 
Thus, $\mathfrak s^*=\op{span}_\C \{\varepsilon_1,\dots,\varepsilon_n,\varepsilon_{n+2}\}$ and the restriction from $\mathfrak t^*$ to $\mathfrak s^*$ denoted by a bar removes the $(n+1)$-th coordinate, that is, 
\begin{equation}
\textstyle\overline{\sum\limits_{i=1}^{n+2} \beta_i\varepsilon_i }  =  \sum\limits_{i=1}^{n} \beta_i\varepsilon_i  + \beta_{n+2}\varepsilon_{n+2}. 
\end{equation}
Furthermore, $\Phi^+(\mathfrak k\times\mathfrak h,\mathfrak s)=\Phi^+(\mathfrak k,\mathfrak t\cap\mathfrak k)\cup \Phi^+(\mathfrak h,\mathfrak t\cap\mathfrak h)$, thus
\begin{equation}
\Sigma=  \{\varepsilon_i\pm \varepsilon_{n+2}: 1\leq i\leq n\} \cup \{\varepsilon_i: 1\leq i\leq n\}\cup \{-\varepsilon_{n+2} \},
\end{equation}
each element with multiplicity one. 
It follows that, for $\nu \in\mathfrak s^*$, 
\begin{equation}\label{eq2n+1:P_Sigma}
	\mathcal P_\Sigma(\nu)>0 
	\quad \Longrightarrow \quad
	\nu_i:= \langle\varepsilon_i,\nu \rangle \in \Z \quad\forall i,\; \nu_i\geq0 \quad\forall \,1\leq i\leq n, \; \nu_{n+2}\leq \textstyle \sum\limits_{i=1}^n \nu_i. 
\end{equation}

From \eqref{eq:Kostant}, we obtain that the number of times that $\sigma_\mu\otimes \tau_k$ appears in $\pi_\lambda|_{K\times H}$ is given by 
\begin{equation}\label{eq2n+1:Kostant}
\dim \Hom_{K\times H}({\sigma_\mu}\otimes {\tau_k},{\pi_\lambda})
= \sum_{\omega \in W_{\mathfrak g}} \op{sgn}(\omega) \;  \mathcal P_{\Sigma} \big( \, \overline{ \omega(\lambda+\rho_{\mathfrak g})- \rho_{\mathfrak g}} -\mu- k\varepsilon_{n+2}\big).
\end{equation}
The next lemmas indicate the non-zero terms in the above sum. 
The proof of the first one is completely analogous to the proof of Lemma~\ref{lem2n:Weyl1}.

\begin{lemma}\label{lem2n+1:Weyl1}
Assume $\lambda_{n+2}\geq0$. If $\omega \in W_{\mathfrak g}$ satisfies that the $\omega$-th term in \eqref{eq2n+1:Kostant} is non-zero, then $\omega=p $ or $\omega= s_{n+1}s_{n+2}p$, for some permutation $p$. 
\end{lemma}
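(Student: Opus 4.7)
The plan is to follow the blueprint of Lemma~\ref{lem2n:Weyl1} essentially verbatim, making one structural adjustment specific to type D. Writing any $\omega\in W_{\mathfrak g}$ as $\omega=sp$ with $p$ a permutation of the $n+2$ coordinates and $s$ a sign change on a subset $J\subset\{1,\dots,n+2\}$, the only new ingredient is that in type D the Weyl group forces $|J|$ to be even.

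First I would exploit the non-negativity constraints imposed by \eqref{eq2n+1:P_Sigma} on the $\varepsilon_j$-coordinate of $\overline{\omega(\lambda+\rho_{\mathfrak g})-\rho_{\mathfrak g}}-\mu-k\varepsilon_{n+2}$ for each index $1\leq j\leq n$. The bar operator erases the $(n+1)$-th slot, but this is harmless since the test indices satisfy $j\leq n$. Assuming $s(\varepsilon_j)=-\varepsilon_j$ for some such $j$, a short chain of inequalities mirroring the type B argument will show that $\langle\varepsilon_j,\omega(\lambda+\rho_{\mathfrak g})\rangle$ is simultaneously strictly positive, since $\rho_{\mathfrak g}$ contributes $n+2-j>0$ and $\mu_j\geq 0$, and non-positive, since it equals $-\langle\varepsilon_j,p(\lambda+\rho_{\mathfrak g})\rangle$ while every coordinate of $\lambda+\rho_{\mathfrak g}$ is non-negative under the hypothesis $\lambda_{n+2}\geq 0$. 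This contradiction forces $J\subset\{n+1,n+2\}$.

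The parity constraint that $|J|$ be even, which is the sole genuine difference from the type B setting, then immediately leaves only the two possibilities $J=\emptyset$ and $J=\{n+1,n+2\}$, yielding $\omega=p$ or $\omega=s_{n+1}s_{n+2}p$ respectively. I do not anticipate any substantive obstacle; the only point worth a quick sanity check is that the hypothesis $\lambda_{n+2}\geq 0$ is genuinely used (and sufficient) to make every coordinate of $\lambda+\rho_{\mathfrak g}$ non-negative, and that the new generator $-\varepsilon_{n+2}\in\Sigma$, absent in type B, does not interfere with the $j\leq n$ analysis since it affects only the $\varepsilon_{n+2}$-coordinate.
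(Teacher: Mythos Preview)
Your proposal is correct and follows precisely the approach the paper intends: the paper itself merely states that the proof is ``completely analogous to the proof of Lemma~\ref{lem2n:Weyl1}'', and your outline faithfully adapts that argument, correctly identifying the two points that change in type~D (the bar operator kills the $(n+1)$-th coordinate, which is irrelevant for indices $j\leq n$, and the parity constraint on the sign-change set reduces $J\subset\{n+1,n+2\}$ to $J\in\{\emptyset,\{n+1,n+2\}\}$). Your check that $\lambda_{n+2}\geq 0$ is exactly what guarantees every coordinate of $\lambda+\rho_{\mathfrak g}$ is non-negative (since the $(n+2)$-th entry of $\rho_{\mathfrak g}$ vanishes in type~D) is the one genuinely new verification needed, and you have it.
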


\begin{lemma}\label{lem2n+1:Weyl2}
Assume $\lambda_i\geq \mu_i\geq \lambda_{i+1}$ for all $1\leq i\leq n$. If $\omega \in W_{\mathfrak g}$ satisfies that the $\omega$-th term in \eqref{eq2n+1:Kostant} is non-zero, then $\omega$ is in $\{1,\; s_{n+1}\,s_{n+2},\; p_{n+1,n+2},\; s_{n+1}\,s_{n+2}\, p_{n+1,n+2}\} $. 
\end{lemma}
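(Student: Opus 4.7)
The argument will closely mirror that of Lemma~\ref{lem2n:Weyl2}. First I would invoke Lemma~\ref{lem2n+1:Weyl1} to reduce to $\omega = sp$ with $p$ a permutation and $s \in \{1,\, s_{n+1}s_{n+2}\}$. The goal is then to show that $p$ must fix $\varepsilon_1,\dots,\varepsilon_n$, so that $p \in \{1,\, p_{n+1,n+2}\}$; combined with the two choices of $s$, this yields exactly the four elements listed.

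The central observation is that both $s = 1$ and $s = s_{n+1}s_{n+2}$ fix every $\varepsilon_l$ with $l \leq n$, and that $k\varepsilon_{n+2}$ has no $\varepsilon_l$-component for such $l$. Hence, letting $r$ be the index with $p(\varepsilon_r) = \varepsilon_l$, the same direct computation as in the type B case gives
\[
\langle \varepsilon_l,\, \overline{\omega(\lambda+\rho_{\mathfrak g}) - \rho_{\mathfrak g}} - \mu - k\varepsilon_{n+2}\rangle = \lambda_r - \mu_l + l - r,
\]
and \eqref{eq2n+1:P_Sigma} forces $\lambda_r \geq \mu_l + r - l$. I would then proceed by induction on $l$: for $l = 1$, any $r \geq 2$ produces $\lambda_r \geq \mu_1 + r - 1 > \mu_1 \geq \lambda_2 \geq \lambda_r$, a contradiction (the inequality $\lambda_2 \geq \lambda_r$ in the case $r = n+2$ uses the standing reduction $\lambda_{n+2} \geq 0$ inherited from Lemma~\ref{lem2n+1:Weyl1} together with $\lambda_2 \geq \lambda_{n+1} \geq \lambda_{n+2}$); for $2 \leq l \leq n-1$, assuming $p$ fixes $\varepsilon_1,\dots,\varepsilon_{l-1}$, the analogous step gives $r = l$.

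The delicate case is $l = n$, where $r \in \{n,\, n+1,\, n+2\}$. The option $r = n+1$ is ruled out as in the type B argument, since $\lambda_{n+1} \geq \mu_n + 1 > \mu_n \geq \lambda_{n+1}$ is a contradiction. The genuinely new option $r = n+2$ would demand $\lambda_{n+2} \geq \mu_n + 2$, but the chain $\mu_n \geq \lambda_{n+1} \geq \lambda_{n+2}$ makes this impossible. Therefore $r = n$, so $p$ permutes $\{\varepsilon_{n+1}, \varepsilon_{n+2}\}$, establishing the four claimed elements.

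I expect the main obstacle, compared to the type B case, to be precisely this additional coordinate $r = n+2$ at each inductive step; its exclusion is handled uniformly by combining the type D dominance $\lambda_{n+1} \geq |\lambda_{n+2}|$ with the reduction $\lambda_{n+2} \geq 0$ from Lemma~\ref{lem2n+1:Weyl1}. Note that the claim does not assert that all four elements give nonzero contributions to \eqref{eq2n+1:Kostant}; it only narrows the Weyl sum down to this set, which is exactly what is needed for the subsequent computation.
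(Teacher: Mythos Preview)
Your proposal is correct and follows essentially the same route as the paper: invoke Lemma~\ref{lem2n+1:Weyl1} to restrict $s$, then run the induction from Lemma~\ref{lem2n:Weyl2} verbatim to pin down $p$ on $\varepsilon_1,\dots,\varepsilon_{n-1}$, and finally dispose of $r=n+1$ and $r=n+2$ at $l=n$ by the contradictions $\lambda_{n+1}-\mu_n-1\geq 0$ and $\lambda_{n+2}-\mu_n-2\geq 0$. The paper's write-up is terser (it just says ``by proceeding as in the proof of Theorem~\ref{thm2n:mainthm}'' for the inductive part), but the substance is identical.
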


\begin{proof}
We write $\omega = sp$ with $s=1$ or $s= s_{n+1}s_{n+2}$ by Lemma~\ref{lem2n+1:Weyl1}.
By proceeding as in the proof of Theorem \ref{thm2n:mainthm} we obtain that $\omega(\varepsilon_i)=\varepsilon_i$ for all $1\leq i\leq n-1$.

Suposse $\omega(\varepsilon_{n+1})=\varepsilon_n$, then $0\leq \langle \varepsilon_n, \omega(\lambda+\rho_{\mathfrak g})-\rho_{\mathfrak g}-\mu-k\varepsilon_{n+2}\rangle =  \lambda_{n+1}-\mu_n-1<0$ which is a contradiction. Finally, if  $\omega(\varepsilon_{n+2})=\varepsilon_n$, then $0\leq \langle \varepsilon_n, \omega(\lambda+\rho_{\mathfrak g})-\rho_{\mathfrak g}-\mu-k\varepsilon_{n+2}\rangle =  \lambda_{n+2}-\mu_n-2<0$ which is again a contradiction. Thus, $\omega(\varepsilon_{n})=\varepsilon_n$ and the claim follows.
\end{proof}

We now show that it is sufficient to prove Theorem~\ref{thm2n+1:mainthm} for $\lambda_{n+2}\geq0$. 
We set $\widetilde\lambda=\sum_{i=1}^{n+1} \lambda_i\varepsilon_i- \lambda_{n+2}\varepsilon_{n+2}$ for any $\lambda=\sum_{i=1}^{n+2} \lambda_i\varepsilon_i \in P^{++}(G)$. 
Note $\widetilde \lambda \in P^{++}(G)$.

\begin{lemma}\label{lem2n+1:mu_n>0}
For any $\lambda\in P^{++}(G)$, $\mu\in P^{++}(K)$, $k\geq0$, we have that $\pi_{\lambda}|_{K\times H} \simeq \pi_{\widetilde \lambda}|_{K\times H}$. 
\end{lemma}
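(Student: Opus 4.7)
The plan is to imitate Lemma~\ref{lem2n:mu_n>0} by exhibiting an automorphism $\varphi$ of $G$ such that $\pi_\lambda\circ\varphi\simeq \pi_{\widetilde\lambda}$ and whose restriction to $K\times H$ is inner. The natural candidate is conjugation $\varphi(x)=g_0\,x\,g_0^{-1}$ by $g_0=\diag(I_{2n+3},-1)\in \Ot(2n+4)$. Although $g_0\notin G$ (its determinant is $-1$), conjugation still defines an automorphism $\varphi\colon G\to G$.

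First I would compute the action of $\varphi$ on the maximal torus \eqref{eq2n+1:toromaximal}. Since $\diag(1,-1)\,R(\theta)\,\diag(1,-1)=R(-\theta)$, the map $\varphi$ negates $\theta_{n+2}$ and fixes every other $\theta_j$; dually, it fixes $\varepsilon_j$ for $j\leq n+1$ and sends $\varepsilon_{n+2}\mapsto -\varepsilon_{n+2}$. Inspection of \eqref{eq2n+1:positiverootsG} shows that this map merely swaps $\varepsilon_i+\varepsilon_{n+2}\leftrightarrow \varepsilon_i-\varepsilon_{n+2}$ for $i<n+2$, hence preserves the positive system. Consequently a highest weight vector of $\pi_\lambda$ becomes a highest weight vector of weight $\widetilde\lambda$ for $\pi_\lambda\circ\varphi$, so $\pi_\lambda\circ\varphi\simeq \pi_{\widetilde\lambda}$.

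Next I would verify that $\varphi|_{K\times H}$ is an inner automorphism of $K\times H$. Since $g_0$ is the identity on the first $2n+3$ coordinates, it centralizes $K=\SO(2n+1)$ pointwise. On the lower right $3\times 3$ block where $H=\SO(3)$ sits, $\varphi$ acts as conjugation by $\diag(1,1,-1)$; writing $\diag(1,1,-1)=(-I_3)\,\diag(-1,-1,1)$ and using that $-I_3$ is central, one sees that this coincides with conjugation by $\diag(-1,-1,1)\in \SO(3)=H$. Thus $\varphi|_{K\times H}$ is conjugation by $(I_{2n+1},\diag(-1,-1,1))\in K\times H$, so $(\pi_\lambda\circ\varphi)|_{K\times H}\simeq \pi_\lambda|_{K\times H}$, and combining with the previous step yields $\pi_\lambda|_{K\times H}\simeq \pi_{\widetilde\lambda}|_{K\times H}$.

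The only delicate point, and the one I would not want to overlook, is the last observation: $\diag(1,1,-1)$ does not itself lie in $\SO(3)$, so at first sight the restriction of $\varphi$ to $H$ could conceivably be outer; the rescue is that conjugation ignores the central scalar $-I_3$. This is precisely what allows a non-trivial (outer) automorphism of $G$ to become inner upon restriction to $K\times H$, and hence what makes the strategy of Lemma~\ref{lem2n:mu_n>0} carry over from the type B case.
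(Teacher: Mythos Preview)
Your proposal is correct and is exactly the approach the paper takes: both use conjugation by $g_0=\diag(1,\dots,1,-1)\in\Ot(2n+4)$, observe that $\pi_\lambda\circ\varphi\simeq\pi_{\widetilde\lambda}$, and that $\varphi$ restricts trivially to $K$ and (up to isomorphism) to $H$. You simply supply more detail than the paper does on the last point, spelling out why $\varphi|_H$ is inner via $\diag(1,1,-1)=(-I_3)\diag(-1,-1,1)$, whereas the paper just asserts $\tau_k\circ\varphi|_H\simeq\tau_k$.
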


\begin{proof}
We set $g_0\in \diag(1,\dots,1,-1) \in \Ot(2n+4)$. 
Although $g_0$ is not in $G$, the map $\varphi: x \mapsto g_0xg_0$ is an automorphism of $G$. 
It turns out that $\pi_\lambda\circ\varphi \simeq\pi_{\widetilde \lambda}$, $\sigma_\mu\circ\varphi|_K =\sigma_{\mu}$, and $\tau_k\circ\varphi|_H \simeq \tau_k$.
The assertions then follows. 
\end{proof}

We are now in position to prove the main theorem of this section.

\begin{proof}[Proof of Theorem~\ref{thm2n+1:mainthm}]
The strategy is the same as in the proof of Theorem~\ref{thm2n:mainthm}. 
By Lemma~\ref{lem2n+1:mu_n>0}, we may assume $\lambda_{n+2}\geq0$.

From \eqref{eq2n+1:Kostant} and Lemmas~\ref{lem2n+1:Weyl1} and \ref{lem2n+1:Weyl2}, we obtain that 
\begin{multline} \label{eq2n+1:taulLHS}
\dim \Hom_{K\times H}({\sigma_\mu}\otimes {\tau_k}, {\pi_\lambda}) =
\mathcal P_{\Sigma}(\bar\lambda -\mu -k\varepsilon_{n+2}) 
+ \mathcal P_{\Sigma}(\bar \lambda -\mu -(2\lambda_{n+2}+k)\varepsilon_{n+2})
\\ - \mathcal P_{\Sigma}(\bar\lambda -\mu +(\lambda_{n+1}-\lambda_{n+2}+1-k)\varepsilon_{n+2}) 
- \mathcal P_{\Sigma}(\bar\lambda - \mu -(\lambda_{n+1}+\lambda_{n+2}+1+k)\varepsilon_{n+2}). 
\end{multline}
Set $\Sigma''=\Sigma'\cup \{-\varepsilon_{n+2}\}$, where $\Sigma'=\{\varepsilon_i\pm \varepsilon_{n+2}: 1\leq i\leq n\}$. 
One has that, see for instance \cite[(9.56)]{Knapp-book-beyond},
$ \mathcal P_{\Sigma''}(\nu) = \mathcal P_{\Sigma''}(\nu+m\varepsilon_{n+2}) + \sum_{r=0}^{m-1}\mathcal P_{\Sigma'}(\nu+r\varepsilon_{n+2}) $ 
for all $\nu\in \mathfrak s^*$. 
Then, 
\begin{multline}\label{eq2n+1:recursionP1}
\mathcal P_{\Sigma}(\bar\lambda -\mu -k\varepsilon_{n+2})- \mathcal P_{\Sigma}(\bar\lambda -\mu +(\lambda_{n+1}-\lambda_{n+2}+1-k)\varepsilon_{n+2})  \\ 
= \sum_{\gamma_1=0}^{\lfloor \frac{\lambda_1-\mu_1}{2}\rfloor} \dots \sum_{\gamma_n=0}^{\lfloor \frac{\lambda_n-\mu_n}{2}\rfloor} \sum_{I\subset \intervalo} \Big(
\mathcal P_{\Sigma''}\big(\bar\lambda - \mu -k\varepsilon_{n+2}-2\gamma - \beta_I \big) \\  \shoveright{ - \mathcal P_{\Sigma''}\big(\bar\lambda - \mu +(\lambda_{n+1}-\lambda_{n+2}+1-k)\varepsilon_{n+2}-2\gamma - \beta_I \big) \Big)}\\
= \sum_{r=\lambda_{n+2}}^{\lambda_{n+1}}
	\sum_{\gamma_1=0}^{\lfloor \frac{\lambda_1-\mu_1}{2}\rfloor} \dots \sum_{\gamma_n=0}^{\lfloor \frac{\lambda_n-\mu_n}{2}\rfloor} \sum_{I\subset \intervalo}
\mathcal P_{\Sigma'}\big(\bar\lambda - \mu +(r-\lambda_{n+2}-k)\varepsilon_{n+2}-2\gamma - \beta_I \big) \\
= \sum_{r=\lambda_{n+2}}^{\lambda_{n+1}} \sum_{\gamma_1=0}^{\lfloor \frac{\lambda_1-\mu_1}{2}\rfloor} \dots \sum_{\gamma_n=0}^{\lfloor \frac{\lambda_n-\mu_n}{2}\rfloor} 
\mathcal P_{\Sigma'}\big(2(\bar\lambda - \mu +(r-\lambda_{n+2}-k)\varepsilon_{n+2}-2\gamma )\big) .
\end{multline}
The last equality follows by Lemma~\ref{lem:P_Sigma'}. 
Similarly, one obtains that 
\begin{multline}\label{eq2n+1:recursionP2}
\mathcal P_{\Sigma}(\bar \lambda -\mu -(2\lambda_{n+2}+k)\varepsilon_{n+2})
- \mathcal P_{\Sigma}(\bar\lambda - \mu -(\lambda_{n+2}+\lambda_{n+1}+1+k)\varepsilon_{n+2}) \\ 
= - \sum_{r=\lambda_{n+2}}^{\lambda_{n+1}} \sum_{\gamma_1=0}^{\lfloor \frac{\lambda_1-\mu_1}{2}\rfloor} \dots \sum_{\gamma_n=0}^{\lfloor \frac{\lambda_n-\mu_n}{2}\rfloor} 
\mathcal P_{\Sigma'}\big(2(\bar\lambda - \mu -(r+\lambda_{n+2}+1+k)\varepsilon_{n+2}-2\gamma)\big). 
\end{multline}

Now, substituting \eqref{eq2n+1:recursionP1} and \eqref{eq2n+1:recursionP2} in \eqref{eq2n+1:taulLHS}, we get
\begin{multline} \label{eq2n+1:taulLHS2}
\dim \Hom_H({\tau_k}, \Hom_{K}({\sigma_\mu}, {\pi_\lambda})) =
\dim \Hom_{K\times H}({\sigma_\mu}\otimes {\tau_k}, {\pi_\lambda}) \\
=\sum_{r=\lambda_{n+2}}^{\lambda_{n+1}} \sum_{\gamma_1=0}^{\lfloor \frac{\lambda_1-\mu_1}{2}\rfloor} \dots \sum_{\gamma_n=0}^{\lfloor \frac{\lambda_n-\mu_n}{2}\rfloor} 
\left(\mathcal P_{\Sigma'}\big(2(\bar\lambda - \mu +(r-\lambda_{n+2}-k)\varepsilon_{n+2}-2\gamma )\big) \right.\\\left.- \mathcal P_{\Sigma'}\big(2(\bar\lambda - \mu -(r+\lambda_{n+2}+1+k)\varepsilon_{n+2}-2\gamma)\big) \right). 
\end{multline}

Again, the right-hand side of \eqref{eq2n+1:tensores} is
\begin{equation}\label{eq2n+1:RHS}
\bigoplus_{r=\lambda_{n+2}}^{\lambda_{n+1}}\bigoplus_{\gamma_1=0}^{\lfloor \frac{\lambda_1-\mu_1}{2}\rfloor} 
\dots \bigoplus_{\gamma_n=0}^{\lfloor \frac{\lambda_n-\mu_n}{2}\rfloor}
\tau_{\lambda_1-\mu_1-2\gamma_1}\otimes\dots \otimes \tau_{\lambda_n-\mu_n-2\gamma_n} \otimes \tau_{r}.
\end{equation}
By Theorem~\ref{thm:ClebschGordan}, $\tau_k$ appears in the factor $\tau_{\lambda_1-\mu_1-2\gamma_1}\otimes\dots \otimes \tau_{\lambda_n-\mu_n-2\gamma_n} \otimes \tau_{r}$ with coefficient
\begin{equation*}
\textstyle \mathcal P_{\Sigma'}\left( 2\big(\bar\lambda-\mu-2\gamma \big) +2(r-\lambda_{n+2}-k)\varepsilon_{n+2}\right) 
- \mathcal P_{\Sigma'}\left( 2\big(\bar\lambda-\mu -2\gamma\big)+ 2(r-\lambda_{n+2}+k+1)\varepsilon_{n+2}\right).
\end{equation*}
Since $\mathcal P_{\Sigma'}(\alpha)=\mathcal P_{\Sigma'}(s_{n+2}\alpha)$, it follows that the number of times that $\tau_k$ occurs in \eqref{eq2n+1:RHS} coincides with \eqref{eq2n+1:taulLHS2}, as asserted. 
\end{proof}

\subsection{Prescribed highest weight end for type D case}
We next show the analogous result to Theorem~\ref{thm2n:ending} for the type D case, which gives an explicit decomposition of the multiplicity space as an $H$-module when the ending coefficients of $\lambda$ coincide with a part of the coefficients of $\mu$. 
This result will be also proved by Tsukamoto's branching law from $G$ to $K\times H$.

\begin{theorem}\label{thm2n+1:ending}
Let $G=\SO(2n+4)$, $K=\SO(2n+1)$, and $H=\SO(3)$ for any $n\geq1$. 
For $\mu =\sum_{i=1}^{n}\mu_i \varepsilon_i\in P^{++}(K)$ and $\lambda=\sum_{i=1}^{n+2} \lambda_i\varepsilon_i \in P^{++}(G)$ with $|\lambda_{i+3}|=\mu_i$ for all $1\leq i\leq n-1$ and $\mu_n\leq |\lambda_{n+2}|$, then
\begin{equation}
\Hom_K(\sigma_\mu,\pi_\lambda)\simeq 	\pi'_{\lambda'}|_H\otimes \tau_{\mu_n},
\end{equation}
as $H$-modules, where $\pi'_{\lambda'}$ denotes the irreducible representation of $\U(3)$ with highest weight $\lambda': = \lambda_1\varepsilon_1'+\lambda_2\varepsilon_2'+ |\lambda_3|\varepsilon_3'$ (see Subsection~\ref{subsec:U(3)}). 
\end{theorem}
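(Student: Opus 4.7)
The strategy mirrors the proof of Theorem~\ref{thm2n:ending}. First, by Lemma~\ref{lem2n+1:mu_n>0} we may assume $\lambda_{n+2}\geq 0$, so the hypothesis becomes $\lambda_{i+3}=\mu_i$ for $1\leq i\leq n-1$ and $\mu_n\leq \lambda_{n+2}$. Kostant's branching formula is unsuitable here because the number of non-vanishing Weyl terms is too large, so we work instead at the level of characters, using Tsukamoto's identity from \cite{Tsukamoto05} specialised to the $\SO(2n+4)\to\SO(2n+1)\times\SO(3)$ setting. This identity expresses $\chi_{\pi_\lambda|_{K\times H}}$ as a sum over every $\mu\in P^{++}(K)$ triply interlacing $\lambda$ of $\chi_{\sigma_\mu}$ times a rational expression in $e^{\pm\varepsilon_{n+2}}$ indexed by admissible tuples $(a_1,\dots,a_n)\in\Z^n$, in complete analogy with \eqref{eq2n:Tsukamotocharacters} and Theorem~\ref{thm2n:Tsukamoto}.

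Next we isolate the term accompanying $\chi_{\sigma_\mu}$ for our $\mu$. The equalities $\lambda_{i+3}=\mu_i$ collapse the admissibility ranges for the $a_i$: one obtains $a_i=\lambda_{i+2}=\mu_{i-1}$ for $2\leq i\leq n-1$. The critical point is that the combined bound $\mu_n\leq \lambda_{n+2}=\mu_{n-1}$, together with the Type D version of the admissibility bounds on $a_n$, pins $a_n$ to a single value, in contrast to the Type B situation where $a_n$ retains a nontrivial range (producing the direct sum $\bigoplus_j\tau_j$). Consequently the parameters $l_i$ analogous to those in \eqref{eq2n:l_i} become $l_i=1$ for $3\leq i\leq n$, cancelling those factors and leaving only a single sum over $a_1\in\{\lambda_3,\dots,\lambda_1\}$.

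The remaining expression factors as a product of two pieces. The piece depending solely on $a_1$ is identified with the coefficient of $\chi_{\sigma_0}$ in $\chi_{\pi_{\lambda'}|_{K\times H}}$ for $\lambda'=\lambda_1\varepsilon_1+\lambda_2\varepsilon_2+\lambda_3\varepsilon_3$, exactly as in the proof of Theorem~\ref{thm2n:ending}; the second piece, coming from the last factor in Tsukamoto's formula at the pinned value of $a_n$, is verified to equal $\chi_{\tau_{\mu_n}}$. Together this establishes the intermediate isomorphism
\begin{equation}
\Hom_K(\sigma_\mu,\pi_\lambda)\simeq \Hom_K(\sigma_0,\pi_{\lambda'})\otimes \tau_{\mu_n}
\end{equation}
of $H$-modules. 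Finally, Knapp's duality from \cite{Knapp01} identifies $\Hom_K(\sigma_0,\pi_{\lambda'})$ as an $H$-module with the restriction $\pi'_{\lambda'}|_H$ of the $\U(3)$-representation.

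The main obstacle is the second step: verifying that the last factor in Tsukamoto's expression, evaluated at the pinned value of $a_n$, produces exactly $\chi_{\tau_{\mu_n}}$, and confirming the precise admissibility range which forces $a_n$ to a single value. This requires careful bookkeeping of the half-integer shift in $\rho_{\mathfrak h}$ and of the sign convention on $\lambda_{n+2}$; it is precisely what structurally distinguishes the Type D ending from its Type B counterpart, by eliminating the summation over $j$ which persists in \eqref{eq:U(3)toSO(3)}'s counterpart for Theorem~\ref{thm2n:ending}.
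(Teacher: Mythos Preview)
Your overall strategy is exactly the paper's: reduce to $\lambda_{n+2}\geq0$ via Lemma~\ref{lem2n+1:mu_n>0}, use Tsukamoto's character identity to show the intermediate isomorphism $\Hom_K(\sigma_\mu,\pi_\lambda)\simeq \Hom_K(\sigma_0,\pi_{\lambda'})\otimes\tau_{\mu_n}$, then apply Knapp's duality. The final structure you describe is correct.

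However, your description of Tsukamoto's Type D formula is not ``in complete analogy'' with the Type B case, and several concrete claims in your second paragraph are wrong. In the Type D setting \eqref{eq2n+1:Tsukamotocharacters} the sum runs over $(n+1)$-tuples $(a_1,\dots,a_{n+1})$, the admissibility bounds are $\max(\mu_i,\lambda_{i+1})\leq a_i\leq \min(\mu_{i-2},\lambda_i)$ (note the shift in both indices relative to Type B), and the parameters $l_i$ are expressed through the $\mu_j$'s rather than the $\lambda_j$'s as in \eqref{eq2n+1:l_i}. Under the hypotheses $\lambda_{i+3}=\mu_i$ and $\mu_n\leq\lambda_{n+2}$ the collapse pins $a_i=\lambda_{i+1}$ for $3\leq i\leq n+1$, but leaves \emph{two} free parameters, $a_1\in[\lambda_2,\lambda_1]$ and $a_2\in[\lambda_3,\lambda_2]$, not the single sum over $a_1\in[\lambda_3,\lambda_1]$ you state. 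Correspondingly $l_1=a_1-a_2+1$, $l_i=1$ for $2\leq i\leq n$, and $l_{n+1}=\mu_n+\tfrac12$; it is this last value that yields $\chi_{\tau_{\mu_n}}$ directly, and the double sum over $(a_1,a_2)$ is what matches the $\chi_{\sigma_0}$-coefficient in $\chi_{\pi_{\lambda'}|_{K\times H}}$. So the computation works, but not via the Type B bookkeeping you have imported; if you carried out your version literally it would not produce the correct factor.
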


We note that the condition $\mu_n\leq |\lambda_{n+2}|$ is redundant unless $n=1$. Furthermore, $|\lambda_{i+3}|=\lambda_{i+3}$ for all $i\leq n-2$. 

Fix $\lambda\in P^{++}(G)$ with $\lambda_{n+2}\geq 0$. 
It will be convenient to set $\lambda_{n+3}=\mu_{n+1}=0$ and $\mu_0=\mu_{-1}=\lambda_1$.
We recall from \eqref{eq:characterrestriction}, that the character of $\pi|_{K\times H}$ is given by $\overline{\chi_\pi}$, for any finite dimensional representation $\pi$ of $G$. 
Tsukamoto established in the proof of Theorem~4 in \cite{Tsukamoto05} that the character of the restriction of $\pi_\lambda$ to $K\times H$ is given by
\begin{equation}\label{eq2n+1:Tsukamotocharacters}
\chi_{\pi_\lambda|_{K\times H}} =  \overline{\chi_{\pi_\lambda}} = 
\frac{1}{(e^{\frac12{\varepsilon_{n+1}}}-  e^{-\frac12{\varepsilon_{n+1}}})} 
\sum_{\mu} \;\chi_{ \sigma_\mu}\;
\sum_{(a_1,\dots,a_{n+1})} \frac{\prod\limits_{i=1}^{n+1} (e^{l_i\varepsilon_{n+2}}-e^{-l_i\varepsilon_{n+2}})} {(e^{\varepsilon_{n+2}}-  e^{-\varepsilon_{n+2}})^n  } ,
\end{equation}
where the first sum is over every $\mu\in P^{++}(K)$ \emph{triply interlacing} $\lambda$, that is, $\lambda_i\geq \mu_i\geq \lambda_{i+3}$ for all $1\leq i\leq n$, the second sum is over the $(n+1)$-tuples $(a_1,\dots,a_{n+1})\in\Z^{n+1}$ satisfying that $a_1 \geq \dots \geq  a_{n+1}\geq0$ and $\max(\mu_i,\lambda_{i+1}) \leq a_i\leq \min(\mu_{i-2},\lambda_i)$ for all $1\leq i\leq n+1$, and the parameters $l_1,\dots,l_{n+1}$ are given by
\begin{equation}\label{eq2n+1:l_i}
\begin{cases}
l_i = \min(\mu_{i-1},a_{i}) - \max(\mu_{i}, a_{i+1})+1 & \quad\text{for all $1\leq i\leq n$,}\\
l_{n+1} = \min(\mu_{n},a_{n+1}) +1/2.
\end{cases}
\end{equation}
He thus got the next implicit branching law from $G=\SO(2n+4)$ to $K\times H=\SO(2n+1)\times\SO(3)$.

\begin{theorem}\cite[Theorem~4]{Tsukamoto05}\label{thm2n+1:Tsukamoto}
Let $\lambda\in P^{++}(G)$, $\mu\in P^{++}(K)$, and $k\varepsilon_{n+1}\in P^{++}(H)$. 
If $\lambda_i\geq \mu_i\geq \lambda_{i+3}$ for all $1\leq i\leq n-2$, $\lambda_{n-1}\geq \mu_{n-1}\geq |\lambda_{n+2}|$, and $\lambda_{n}\geq \mu_{n}$, then the number of times that $\sigma_\mu\otimes \tau_k$ occurs in $\pi_\lambda|_{K\times H}$ is given by $m_{k}$, where the coefficients $m_p$ for $p\geq0$ are defined by
\begin{equation}\label{eq2n+1:Tsukamoto}
\sum_{(a_1,\dots,a_n)} \frac{\prod\limits_{i=1}^{n+1} (e^{l_i\varepsilon_{n+2}}-e^{-l_i\varepsilon_{n+2}})} {(e^{\varepsilon_{n+2}}-  e^{-\varepsilon_{n+2}})^n}  = \sum_{p\geq0} m_p \; \big(e^{(p+\frac12)\varepsilon_{n+2}}- e^{-(p+\frac12)\varepsilon_{n+2}}\big),
\end{equation}
where the sum at the left is over the $(n+1)$-tuples $(a_1,\dots,a_{n+1})\in\Z^{n+1}$ satisfying that $a_1 \geq \dots \geq  a_{n+1}\geq0$, $\max(\mu_i,\lambda_{i+1}) \leq a_i\leq \min(\mu_{i-2},\lambda_i)$ for all $1\leq i\leq n$, $|\lambda_{n+2}| \leq a_{n+1}\leq \min(\mu_{n-1},\lambda_{n+1})$, and $l_1,\dots,l_{n+1}$ are given by \eqref{eq2n+1:l_i}. 
Otherwise, $\sigma_\mu\otimes \tau_k$ does not occur in $\pi_\lambda|_{K\times H}$. 
\end{theorem}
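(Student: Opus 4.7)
The strategy is to take the character identity \eqref{eq2n+1:Tsukamotocharacters} as the main input and extract the individual multiplicities by matching Fourier expansions on the $H$-side. First I would reduce to the case $\lambda_{n+2}\geq 0$. The outer automorphism $\varphi$ of Lemma~\ref{lem2n+1:mu_n>0} sends $\pi_\lambda$ to $\pi_{\widetilde\lambda}$ while fixing $\sigma_\mu$ and $\tau_k$; hence the multiplicities of $\sigma_\mu\otimes\tau_k$ in $\pi_\lambda|_{K\times H}$ and in $\pi_{\widetilde\lambda}|_{K\times H}$ coincide. Under this reduction, the hypothesis $\mu_{n-1}\geq |\lambda_{n+2}|$ becomes the triple-interlacing condition $\mu_{n-1}\geq \lambda_{n+2}$, and the bound $a_{n+1}\geq |\lambda_{n+2}|$ in \eqref{eq2n+1:Tsukamoto} becomes $a_{n+1}\geq \lambda_{n+2}$, matching the bound in \eqref{eq2n+1:Tsukamotocharacters}.

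Next I would extract the $\sigma_\mu$-isotypic component. Let $N_{\mu,k}=\dim\Hom_{K\times H}(\sigma_\mu\otimes\tau_k,\pi_\lambda)$. On the maximal torus $S=(T\cap K)\times (T\cap H)$ the character of $\pi_\lambda|_{K\times H}$ separates as $\chi_{\pi_\lambda|_{K\times H}}= \sum_{\mu'\in P^{++}(K)}\sum_{k'\geq 0} N_{\mu',k'}\,\chi_{\sigma_{\mu'}}\,\chi_{\tau_{k'}}$. Linear independence of the irreducible characters $\{\chi_{\sigma_{\mu'}}\}$ of $K$ (over the character ring of $H$) then identifies the coefficient of $\chi_{\sigma_\mu}$ in this expansion with $\sum_k N_{\mu,k}\chi_{\tau_k}$. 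On the other hand, \eqref{eq2n+1:Tsukamotocharacters} gives the same coefficient as $\xi_H(\rho_{\mathfrak h})^{-1}$ times the inner sum over $(a_1,\dots,a_{n+1})$. Combining these two expressions yields $\sum_k N_{\mu,k}\chi_{\tau_k}\cdot \xi_H(\rho_{\mathfrak h}) = \sum_{(a_1,\dots,a_{n+1})}\prod_{i=1}^{n+1}(e^{l_i\varepsilon_{n+2}}-e^{-l_i\varepsilon_{n+2}})/(e^{\varepsilon_{n+2}}-e^{-\varepsilon_{n+2}})^n$.

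Finally, by \eqref{eq:Weylcharacter} and \eqref{eq:Weyldenominator} applied to $H=\SO(3)$, one has $\xi_H(\rho_{\mathfrak h})=e^{\frac12\varepsilon_{n+2}}-e^{-\frac12\varepsilon_{n+2}}$ and $\chi_{\tau_k}\cdot\xi_H(\rho_{\mathfrak h})=e^{(k+\frac12)\varepsilon_{n+2}}-e^{-(k+\frac12)\varepsilon_{n+2}}$. Substituting these into the previous identity reproduces exactly \eqref{eq2n+1:Tsukamoto} with $m_k=N_{\mu,k}$, and the stated summation bounds on $(a_1,\dots,a_{n+1})$ are inherited from \eqref{eq2n+1:Tsukamotocharacters} together with the reduction of the first step. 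The complementary assertion, that $\sigma_\mu\otimes\tau_k$ does not occur when the interlacing hypothesis fails, follows because then $\mu$ does not appear among the indices of the outer sum in \eqref{eq2n+1:Tsukamotocharacters}, so its coefficient vanishes. The principal obstacle in this plan would be establishing the character identity \eqref{eq2n+1:Tsukamotocharacters} itself: that is the nontrivial staircase calculation carried out by Tsukamoto, requiring iterated applications of Weyl's character formula through the chain $\SO(2n+4)\supset \SO(2n+2)\supset \SO(2n+1)$ together with the $\SO(3)$-factor and careful telescoping cancellations. With that identity in hand, the remaining Fourier-extraction step is essentially formal.
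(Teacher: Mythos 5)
Your proposal is correct and follows essentially the same route the paper takes: Theorem~\ref{thm2n+1:Tsukamoto} is attributed to Tsukamoto, and the paper derives it (just as you do) by taking the character identity \eqref{eq2n+1:Tsukamotocharacters} as given, using the reduction of Lemma~\ref{lem2n+1:mu_n>0} to assume $\lambda_{n+2}\geq 0$, and then extracting the $\chi_{\sigma_\mu}$-component and matching $\tau_k$-multiplicities via the Weyl character formula for $H=\SO(3)$. You correctly identify that the genuine mathematical content lies in Tsukamoto's establishment of \eqref{eq2n+1:Tsukamotocharacters}, and the remaining coefficient extraction is the formal step the paper also implicitly invokes.
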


\begin{proof}[Proof of Theorem~\ref{thm2n+1:ending}]
From Lemma~\ref{lem2n+1:mu_n>0}, we assume that $\lambda_{n+2}\geq0$.
We will first show that 
\begin{equation}\label{eq2n+1:reduccion}
\Hom_K(\sigma_\mu,\pi_\lambda)\simeq \Hom_K(\sigma_0,\pi_{\lambda'})\otimes \tau_{\mu_n}
\end{equation}
as $H$-modules, where $\lambda'=\lambda_1\varepsilon_1+ \lambda_2\varepsilon_2 + \lambda_3\varepsilon_3$. 
We need to check that the term accompanying $\chi_{\sigma_\mu}$ in $\chi_{\pi_\lambda|_{K\times H}}$ coincides with the term accompanying $\chi_{\sigma_0}$ in $\chi_{\pi_{\lambda'}|_{K\times H}}$ times $\chi_{\tau_{\mu_n}}$.

By assumption, $\lambda_{i+3} = \mu_i$ for all $1\leq i\leq n-1$.
Then, \eqref{eq2n+1:Tsukamotocharacters} yields that the term accompanying $\chi_{\sigma_\mu}$ in $\chi_{\pi_\lambda|_{K\times H}}$ is equal to 
\begin{equation}
\chi_{\tau_{\mu_n}}
\sum_{a_1= \lambda_2}^{\lambda_1} \sum_{a_2=\lambda_3}^{\lambda_2}\,  \frac{e^{(a_1-a_2+1)\varepsilon_{n+1}}-e^{-(a_1-a_2+1)\varepsilon_{n+1}} } {e^{\varepsilon_{n+1}}-  e^{-\varepsilon_{n+1}}},
\end{equation}
On the other hand, also by \eqref{eq2n+1:Tsukamotocharacters}, the term accompanying $\chi_{\sigma_0}$ in $\chi_{\pi_{\lambda'}|_{K\times H}}$ is given by 
\begin{equation}
\sum_{a_1= \lambda_2}^{\lambda_1} \sum_{a_2=\lambda_3}^{\lambda_2}\,  \frac{e^{(a_1-a_2+1)\varepsilon_{n+1}}-e^{-(a_1-a_2+1)\varepsilon_{n+1}} } {e^{\varepsilon_{n+1}}-  e^{-\varepsilon_{n+1}}},
\end{equation}
which completes the proof of \eqref{eq2n+1:reduccion}. 

The proof follows by applying Knapp's duality \cite{Knapp01} to the right-hand side of \eqref{eq2n+1:reduccion}. 
\end{proof}

\begin{remark}
Theorem~\ref{thm2n+1:ending} and \eqref{eq:U(3)toSO(3)}, give an explicit expression for the number of times that $\sigma_\mu\otimes \tau_k$ occurs in $\pi_{\lambda}|_{K\times H}$ for $\lambda$ and $\mu$ as in Theorem~\ref{thm2n+1:ending}. 
\end{remark}

\bibliographystyle{plain}

\end{document}